\documentclass[preprint,11pt]{elsarticle}

\usepackage[T1]{fontenc}
\usepackage{lmodern}
\usepackage{amsmath,amssymb,amsthm,mathtools,mathrsfs}
\usepackage{geometry}
\usepackage{xcolor}
\usepackage{caption}
\usepackage{tocloft}   
\usepackage{hyperref}  
\usepackage{adjustbox}
\makeatletter

\renewcommand{\tmptocnumberline}[1]{%
	\setbox0=\hbox{\appendixname}%
	\appnamewidth=\wd0
	\addtolength{\appnamewidth}{2.5pc}%
	\hb@xt@\appnamewidth{#1.\hfill}}
\makeatother

\hypersetup{colorlinks=true,linkcolor=blue!55!black,urlcolor=blue!55!black}
\allowdisplaybreaks[4]

\newtheorem{theorem}{Theorem}[section]
\newtheorem{lemma}[theorem]{Lemma}
\newtheorem{proposition}[theorem]{Proposition}
\newtheorem{corollary}[theorem]{Corollary}
\theoremstyle{remark}
\newtheorem{remark}[theorem]{Remark}

\newcommand{\cD}{\mathcal D}

\def\P{A_n(x)}
\def\Q{A_{n+1}(x)}
\def\R{A_{n+2}(x)}

\newdimen\Squaresize \Squaresize=11pt
\newdimen\Thickness \Thickness=0.7pt
\def\Square#1{\hbox{\vrule width \Thickness
		\vbox to \Squaresize{\hrule height \Thickness\vss
			\hbox to \Squaresize{\hss#1\hss}
			\vss\hrule height\Thickness}
		\unskip\vrule width \Thickness} \kern-\Thickness}

\def\Vsquare#1{\vbox{\Square{$#1$}}\kern-\Thickness}

\begin{document}
	\begin{frontmatter}
		
		\title{Eulerian triangle is totally positive of order 3}

		\author[]{Jianxi Mao\corref{cor1}}
		\ead{maojx@dlut.edu.cn}
		\cortext[cor1]{Corresponding author}
		
		\address{School of Mathematical Sciences, Dalian University of Technology, Dalian 116024, P. R. China}
		
		\begin{abstract}
		Brenti conjectured that the Eulerian triangle is totally positive.
		In this paper, we prove that the Eulerian triangle is totally positive
		of order \(3\), thereby providing a partial affirmative answer to Brenti's
		conjecture.
		In addition, we establish total positivity of order 3 for several families of generalized Eulerian triangles.
		\end{abstract}
		
		\begin{keyword}
			Totally positive matrix\sep Eulerian triangle\sep Real-rootedness
			\MSC[2020]  05A05 \sep 15B48\sep  05A20 
		\end{keyword}
	\end{frontmatter}
\section{Introduction}
Following Karlin~\cite{Kar68},
a finite or infinite matrix is called {\it totally positive} (or {\it TP} for short)
if its minors of all orders are nonnegative.
The matrix is called {\it totally positive of order $r$} (or {\it TP$_r$} for short)
if its minors of all orders at most $r$ are nonnegative.
Total positivity is a concept of considerable power
that plays an important role in various domains of mathematics, statistics and mechanics~\cite{Kar68,Pin10}.

The Eulerian numbers are classical objects in enumerative
combinatorics, and they have been extensively studied from combinatorial,
algebraic and geometric viewpoints~\cite{Pet15}.
The (ordinary) Eulerian numbers $A(n,k)$ count the number of permutations of  $[n+1]$ with exactly $k$ descents.
Numerous generalizations and refinements arise from finite Coxeter groups, Stirling permutations, and  wreath products~\cite{Bre94,HV12,Ste94}. 
A long-standing conjecture of Brenti~\cite[Conjecture 6.10]{Bre96} states that the Eulerian triangle 
$$A=[A(n,k)]_{n,k\ge 0}=
\left[\begin{array}{rrrrrr}
	1 &  &  &  &  &  \\
	1 & 1 &   &   &   &   \\
	1 & 4 & 1 &   &   &   \\
	1 & 11 & 11 & 1 &   &   \\
	1 & 26 & 66 & 26 & 1 &   \\
	\vdots &  & &  &  & \ddots \\
\end{array}\right]$$
is totally positive.
Brenti~\cite{Bre95} established a systematic connection between total
positivity and combinatorial matrices. Since then, a wide variety of
combinatorial matrices have been shown to be totally positive; see, for
example, the work of Chen, Liang, and Wang~\cite{CLW15,CLW15B}, Wang and
Yang~\cite{WY18}, Galvin and Pacurar~\cite{GP20}, Sokal and 
collaborators~\cite{DebSokal23,DebSokal25,PSZ23,Sokal22}, and
Zhu~\cite{Zhu20}.

The conjecture has long served as a motivating open
problem.
It has inspired extensive research on the total positivity of
related combinatorial triangles and polynomial matrices. In recent
years, several powerful approaches to total positivity have been
developed and refined, including Neville elimination and determinantal
criteria~\cite{GP92,GP93,Pena01}, production-matrix
methods~\cite{ChenDeb21,DebEtAl23,Sokal22}, and continued-fraction
techniques~\cite{PS21,PSZ23}. 
Recently, Zhang~\cite{Zhang25} proved the strict positivity of a family of initial minors of the Eulerian triangle.
Despite these advances, to the best of our
knowledge, only limited progress has been made toward resolving the
conjecture itself.

Throughout the paper, all row and column indices start from 0.  
Let $\{i_0,\ldots,i_s\}_<$ denote the elements listed in increasing order.
For a matrix $M=[M(n,k)]_{n,k\ge 0}$,
let $M({I,J})$ be the submatrix of $M$ obtained by selecting the rows and columns indexed by $I$ and $J$.
For index sets
\[
I=\{i_0,\ldots,i_s\}_<,\qquad J=\{j_0,\ldots,j_s\}_<,
\]
we call the minor $\det M({I,J})$ \emph{admissible} if $j_\ell\leq i_\ell$ for all $\ell.$
Our main result is the following.

\begin{theorem}\label{thm:main}
	The Eulerian triangle is totally positive of order 3.
	Moreover, every admissible minor of order at most 3 is strictly positive.
\end{theorem}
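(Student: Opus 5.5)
The plan is to handle minors of order $1$, $2$ and $3$ separately, working with the row generating polynomials $A_n(x)=\sum_k A(n,k)x^k$ and the recurrence
\[
A(n,k)=(k+1)A(n-1,k)+(n-k+1)A(n-1,k-1).
\]
First, a minor whose index sets violate admissibility (some $j_\ell>i_\ell$) is zero. Indeed, the submatrix then has a block of zeros coming from the lower-triangular shape: rows $i_0,\dots,i_\ell$ meet columns $j_\ell,\dots,j_s$ only in entries with column index exceeding row index, so the submatrix has an $(\ell+1)\times(s-\ell+1)$ zero block, which forces the determinant to vanish. It therefore suffices to prove strict positivity of admissible minors. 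For order $1$ this is just $A(n,k)>0$ for $0\le k\le n$, which is immediate.

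For order $2$, I would prove two things: each row $(A(n,k))_k$ is strictly log-concave, and each column sequence satisfies $A(n,k)A(n+1,k+1)>A(n,k+1)A(n+1,k)$. Row log-concavity follows from real-rootedness of $A_n(x)$ together with Newton's inequalities, which give strict inequality on the support. Column-direction positivity is obtained by induction through the recurrence above, i.e.\ via the operator $A_{n+1}(x)=(1+nx)A_n(x)+x(1-x)A_n'(x)$. The idea is that $A_{n+1}$ interlaces $A_n$, and interlacing real-rooted polynomials with nonnegative coefficients have a TP$_2$ two-row coefficient matrix. General $2\times2$ admissible minors $\det A(\{n,m\},\{k,l\})$ then reduce to these adjacent cases by the standard chaining argument: for positive matrices, TP$_2$ follows from positivity of consecutive $2\times 2$ minors. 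Near the boundary, where entries of the adjacent minors may vanish, admissibility has to be tracked so that the chain stays inside the support.

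Order $3$ is the main obstacle. Here the plan is to reduce to minors with consecutive rows and columns, possibly with gaps handled via Sylvester/Dodgson condensation identities, and then to analyse the $3\times3$ minors built from $A_n,A_{n+1},A_{n+2}$ directly. Concretely, substitute the recurrence twice to express the minor for rows $n,n+1,n+2$ and columns $k,k+1,k+2$ as a combination of products of $A(n,\cdot)$ entries with polynomial weights in $n,k$. The resulting expression should be controlled by the $2\times2$ and $3\times3$ Toeplitz-type minors of row $n$, namely $A(n,k)^2-A(n,k-1)A(n,k+1)$ and its $3\times3$ analogue. The row Toeplitz minors are positive because $A_n$ is real-rooted, so $(A(n,k))_k$ is a Pólya frequency sequence, and the $3\times3$ Toeplitz minors are therefore positive on the support. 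I expect the hardest part to be showing that the polynomial weights produce a positive combination. My first attempt would be to express everything in terms of consecutive ratios $r_k=A(n,k)/A(n,k-1)$, which are decreasing by PF$_\infty$, and to bound the weights using Newton-type inequalities with the explicit factors $k+1$ and $n-k+1$.

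For minors with nonconsecutive indices, I would use the Fekete-type lemma: if all minors with consecutive rows and columns of order at most $3$ are positive (within the support), then the matrix is TP$_3$. This requires care at the triangular boundary, and I would handle that by a small-case check combined with the admissibility condition. Strict positivity for general admissible minors would follow by tracking in Fekete's induction that every step involves only positive consecutive minors.
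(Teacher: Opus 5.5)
Your outline for orders $1$ and $2$ is workable. The interlacing argument for adjacent rows is a legitimate alternative to the paper's direct Newton-inequality computation of $A(n,k)A(n+1,k+1)-A(n,k+1)A(n+1,k)$; note that the operator should read $A_{n+1}=(1+(n+1)x)A_n+x(1-x)A_n'$. The order-$3$ step, however, which is the substance of the theorem, is not proved. You say that after substituting the recurrence twice, the consecutive minor $\det[A(n+i,k+j)]_{0\le i,j\le2}$ ``should be controlled by'' the $2\times2$ and $3\times3$ Toeplitz minors of row $n$, and that a first attempt would use ratios and Newton-type bounds. But you never decompose this cubic form in $A(n,k-2),\dots,A(n,k+2)$, with weights depending on $n$ and $k$, into nonnegative pieces. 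Nothing in the proposal shows that the coefficient-level inequalities that sufficed at order $2$ are enough at order $3$. As written, this is a hope rather than an argument.

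The missing idea is to work at the level of the roots. The paper writes $A_n=\prod_i(1+r_ix)$ and iterates Fisk's identity (Lemma~\ref{lem:root-deletion}) to get $A_{n+1}=A_n+V_1$ and $A_{n+2}=A_n+3V_1+V_2+V_3$. Here $V_1=x(A_n+G_1)$, $V_2=xA_n$, $V_3=x^2(A_n+3G_1+2G_2)$, and $G_j=\sum_{|I|=j}c_If_I$ with positive weights $c_I$. Then $\cD_K(A_n,A_{n+1},A_{n+2})=\cD_K(A_n,V_1,V_2)+\cD_K(A_n,V_1,V_3)$. After multilinear expansion, every row in each term is $f_U$ times a polynomial of degree at most $3$, with $|U|\le3$. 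Cauchy--Binet then writes the minor as $\sum_{U,L}\Phi_U(L)\det T_{f_U}(L,K)$ with $L\subseteq\{0,1,2,3\}$. The Toeplitz minors are nonnegative because each sub-product $f_U$ is real-rooted. The finitely many local coefficients $\Phi_U(L)$ are checked to be nonnegative (Table~\ref{tab:local-coefficients}). Strictness comes from the $U=\varnothing$ term, $\det T_{A_n}(\{0,1,2\},\{k,k+1,k+2\})=s_{(3^k)}(r_1,\dots,r_n)>0$. Your plan, phrased only through Toeplitz minors of row $n$, has no access to the total positivity of the matrices $T_{f_U}$ that drives this argument.

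Your reduction from general admissible minors to consecutive ones is also too vague. Fekete's lemma needs strictly positive lower-order minors, which fails at the structural zeros of a triangular matrix. A ``small-case check'' cannot cover the infinitely many near-diagonal configurations. The paper's Theorem~\ref{thm:structural-fekete} handles this in two steps. It applies Fekete only to the block $M(\{k,\dots,N\},\{k,\dots,k+s-1\})$, where every relevant lower-order minor is admissible. It then fills column gaps by induction on $d(J)$ via the identity of Lemma~\ref{delete}.
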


Our proof combines a reduction criterion for admissible minors
of lower triangular matrices with the real-rootedness of Eulerian
polynomials.
By the same method,
we further extend this result to a family of Eulerian-type
triangles whose row generating polynomials satisfy a differential
recurrence. As applications, we obtain total positivity of order~$3$
for the Eulerian triangle of type $B$, the $m$-Stirling Eulerian
triangles, and the $r$-colored Eulerian triangles.

\begin{remark}
Applying this method, we obtained, with the assistance of \emph{GPT-5.6 Sol} and \emph{Rethlas}, a computer-assisted argument suggesting that the Eulerian triangle is TP$_4$.
The argument reduces the verification to the computation of 126 minors of order 4, all of which pass symbolic checks. 
However, because the AI-generated argument has not been independently verified by hand, we do not regard it as a proof of TP$_4$.
\end{remark}

The paper is organized as follows.
In Section~2, we present a criterion for total positivity of finite
order for lower triangular matrices and collect the necessary
properties of real-rooted polynomials. In Section~3, we prove
Theorem~1.1 by first proving total positivity of order 2 and then
that of order 3.
Section~4 extends the method to a family of Eulerian-type triangles and presents several
applications.
The detailed verification used in Section~3 is given in Appendix A.

\section{Background}\label{sec:fekete}
In this section, we collect several preliminary results on totally
positive matrices and real-rooted polynomials. In the first subsection,
we present a criterion for a lower-triangular matrix to be totally positive
of order $r$. 
In the second subsection, we derive a family of identities
for real-rooted polynomials.

\subsection{A criterion for the total positivity of order $r$}

\begin{lemma}
	\label{lem:structural-zero}
	Let $I=\{i_0,\ldots,i_s\}_<$ and $J=\{j_0,\ldots,j_s\}_<$.  
	For a lower triangular matrix $M$,
	if
	$i_\ell<j_\ell$ for some $\ell$, then $\det M({I,J})=0$.
\end{lemma}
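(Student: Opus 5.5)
The plan is to show that the submatrix $M(I,J)$ has a block of zeros large enough to force singularity. Fix an index $\ell$ with $i_\ell<j_\ell$, and look at the rows indexed by $i_0,\ldots,i_\ell$ (the first $\ell+1$ rows of the submatrix) and the columns indexed by $j_\ell,\ldots,j_s$ (the last $s-\ell+1$ columns).

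For $p\le \ell$ and $q\ge \ell$, the ordering gives
\[
i_p\le i_\ell<j_\ell\le j_q .
\]
Since $M$ is lower triangular, $M(i_p,j_q)=0$. Hence $M(I,J)$ contains an $(\ell+1)\times(s-\ell+1)$ zero block.

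The total size of this block is $(\ell+1)+(s-\ell+1)=s+2$, which exceeds the order $s+1$ of the matrix. I would conclude by a rank or column-space count. The first $\ell+1$ rows vanish on the last $s-\ell+1$ columns. They are therefore supported on the first $\ell$ columns only, so these rows span a space of dimension at most $\ell$. This gives $\ell+1$ rows inside a space of dimension at most $\ell$, so they are linearly dependent and $\det M(I,J)=0$.

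Alternatively, one could apply the Laplace expansion along the first $\ell+1$ rows. Every term then involves an $(\ell+1)\times(\ell+1)$ minor drawn from those rows, and any choice of $\ell+1$ columns must include at least one column among $j_\ell,\ldots,j_s$, since only $\ell$ columns lie outside that range. Such a minor has a zero column, so it vanishes. I expect no real obstacle here; the only point needing care is the index bookkeeping in the inequality chain $i_p\le i_\ell<j_\ell\le j_q$, which relies on both $I$ and $J$ being listed in increasing order.
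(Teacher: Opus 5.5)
Your proof is correct. The chain $i_p\le i_\ell<j_\ell\le j_q$ produces an $(\ell+1)\times(s-\ell+1)$ zero block whose dimensions sum to $s+2>s+1$, so the first $\ell+1$ rows of $M(I,J)$ lie in an $\ell$-dimensional coordinate subspace and are linearly dependent. The paper states this lemma without proof, as a standard fact, and your zero-block reasoning is the same kind it uses for Lemma~\ref{lem:zero-cut}: there the zero block has size $(\ell'+1)\times(s-\ell')$, with dimensions summing to exactly $s+1$, so it gives a block-triangular factorization rather than vanishing.
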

We shall therefore restrict attention to admissible minors, i.e., the row-column index pair
$(I,J)$ satisfying $i_\ell\ge j_\ell$ for all $\ell$.
Note that if $s=0$, this reduces to the fact that 
the entry $M(i,j)=0$  if $i<j$ for a lower triangular matrix $M$.

\begin{lemma}\label{lem:zero-cut}
	Let $M$  be a lower triangular matrix and let $I=\{i_0,\ldots,i_s\}_<$ and $J=\{j_0,\ldots,j_s\}_<$.  
	Suppose that $i_\ell\ge j_\ell$ for all $\ell$.
	If $j_{\ell'+1}>i_{\ell'}$
	for some $0\leq \ell'<s$, then
	\begin{equation}\label{eq:zero-cut-factorization}
		\det M({I,J})
		=\det M\left({\{i_0,\ldots,i_\ell'\}_<,\{j_0,\ldots,j_\ell'\}_<}\right)\cdot
		\det M\left({\{i_{\ell'+1},\ldots,i_s\}_<,\{j_{\ell'+1},\ldots,j_s\}_<}\right).
	\end{equation}
\end{lemma}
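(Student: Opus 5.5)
The plan is to show that the submatrix $N:=M(I,J)$ is block lower triangular, and then use the fact that a block lower triangular determinant is the product of the determinants of its diagonal blocks. Index the rows of $N$ by $a=0,\ldots,s$, so row $a$ corresponds to $i_a$. Index the columns by $b=0,\ldots,s$, so column $b$ corresponds to $j_b$. Then $N(a,b)=M(i_a,j_b)$. Split the indices into $\{0,\ldots,\ell'\}$ and $\{\ell'+1,\ldots,s\}$, which gives
\[
N=\begin{pmatrix} N_{11} & N_{12}\\ N_{21} & N_{22}\end{pmatrix},
\]
where $N_{11}=M(\{i_0,\ldots,i_{\ell'}\},\{j_0,\ldots,j_{\ell'}\})$ and $N_{22}=M(\{i_{\ell'+1},\ldots,i_s\},\{j_{\ell'+1},\ldots,j_s\})$.

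The key step is to show that the upper-right block $N_{12}$ vanishes. Take $a\le \ell'$ and $b\ge \ell'+1$. Because the indices are listed in increasing order and by hypothesis,
\[
i_a\le i_{\ell'}<j_{\ell'+1}\le j_b .
\]
Since $M$ is lower triangular, $M(i_a,j_b)=0$, so $N_{12}=0$.

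With $N_{12}=0$ and square diagonal blocks (of sizes $\ell'+1$ and $s-\ell'$), the standard identity gives $\det N=\det N_{11}\cdot\det N_{22}$. This is exactly \eqref{eq:zero-cut-factorization}. One can see this identity via the Laplace expansion along the first $\ell'+1$ rows: only the column set $\{0,\ldots,\ell'\}$ contributes, since any other choice of columns includes a column of $N_{12}$ and so gives a zero minor. Alternatively, one can factor $N$ as a product of two block triangular matrices.

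There is no real obstacle here. The only points needing care are the index bookkeeping, namely using the monotonicity $i_a\le i_{\ell'}$ and $j_{\ell'+1}\le j_b$, and checking that the blocks are square so that the block determinant formula applies. The admissibility hypothesis $i_\ell\ge j_\ell$ is not needed for the factorization itself. It only ensures that the two factors are again admissible minors, which is how the lemma will be used to reduce to smaller cases.
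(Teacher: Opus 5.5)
Your proof is correct and is the same argument as the paper's: the first $\ell'+1$ selected rows and the last $s-\ell'$ selected columns form a zero block because $i_a\le i_{\ell'}<j_{\ell'+1}\le j_b$, so $M(I,J)$ is block lower triangular with square diagonal blocks and its determinant factors. Your side remark is also accurate: the admissibility hypothesis is not needed for the factorization itself.
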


\begin{proof}
	The first $\ell'+1$ selected rows and the last $s-\ell'$ selected columns form a
	zero block.  The selected matrix is therefore block lower triangular,
	and \eqref{eq:zero-cut-factorization} follows.
\end{proof}

The following determinantal identity is given in~\cite[(1.2)]{Pin10}.

\begin{lemma}\label{delete}
	Let $M$ be a matrix.
	Let $I=\{i_0,\ldots, i_{s-1}\}_<$ and $J=\{j_0,\ldots,j_{s}\}_<$.
	Then, for any $k\in\{0,\ldots,s-1\}$ and $\ell\in\{1,\ldots,s-1\},$
	we have
	\begin{align*}
		&\det M\left({I,J\setminus\{j_\ell\}}\right)\cdot
		\det M\left({I\setminus\{i_k\},J\setminus\{j_0,j_{s}\}}\right)\\
		&\quad=
		\det M({I,J\setminus\{j_0\}})\cdot \det M({I\setminus\{i_k\},J\setminus\{j_\ell,j_{s}\}})\\
		&\qquad+
		\det M({I,J\setminus\{j_{s}\}})\cdot
		\det M({I\setminus\{i_k\},J\setminus\{j_0,j_\ell\}}).
	\end{align*}
\end{lemma}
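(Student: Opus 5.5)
We plan to prove the identity as a three-term Grassmann--Plücker relation for the columns of the $s\times(s+1)$ submatrix $N=M(I,J)$. The row indices only label rows, so $M$ may be an arbitrary matrix and no triangularity is needed.

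\textbf{Setup.} Write $c_0,\ldots,c_s\in\mathbb{R}^s$ for the columns of $N$, with $c_m$ corresponding to $j_m$. Put
\[
D_m=\det M(I,J\setminus\{j_m\}),\qquad 0\le m\le s.
\]
Let $c'_m\in\mathbb{R}^{s-1}$ be $c_m$ with its $k$-th coordinate (the row $i_k$) deleted.

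\textbf{Step 1 (linear relation).} For each row $r$ of $N$, form the $(s+1)\times(s+1)$ matrix obtained by placing row $r$ on top of $N$. It has a repeated row, so its determinant is $0$. Expanding along the top row gives $\sum_{m=0}^{s}(-1)^m D_m\,N(r,m)=0$. Since this holds for every $r$,
\[
\sum_{m=0}^{s}(-1)^m D_m\,c_m=0,
\qquad\text{hence}\qquad
\sum_{m=0}^{s}(-1)^m D_m\,c'_m=0 .
\]

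\textbf{Step 2 (three surviving terms).} Let $T$ be the $s-2$ vectors $c'_m$ with $m\notin\{0,\ell,s\}$, kept in increasing order of $m$. Since $1\le\ell\le s-1$, the indices $0,\ell,s$ are distinct, so $T$ together with one more vector gives an $(s-1)\times(s-1)$ matrix. Consider the determinant
\[
\det\Bigl[\,\textstyle\sum_m (-1)^m D_m c'_m,\ T\,\Bigr]=0.
\]
Expand it by multilinearity in the first column. Every term with $m\notin\{0,\ell,s\}$ has a repeated column and vanishes. What remains is
\[
D_0\det[c'_0,T]+(-1)^{\ell}D_\ell\det[c'_\ell,T]+(-1)^{s}D_s\det[c'_s,T]=0 .
\]

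\textbf{Step 3 (reorder and read off).} Move the first column of each determinant into its increasing position:
\begin{itemize}
\item $c'_0$ stays in position $0$, so there is no sign change.
\item $c'_\ell$ moves past $\ell-1$ columns, giving a factor $(-1)^{\ell-1}$.
\item $c'_s$ moves past all $s-2$ columns, giving a factor $(-1)^{s-2}$.
\end{itemize}
After reordering, the determinants are exactly
\[
\det M(I\setminus\{i_k\},J\setminus\{j_\ell,j_s\}),\quad
\det M(I\setminus\{i_k\},J\setminus\{j_0,j_s\}),\quad
\det M(I\setminus\{i_k\},J\setminus\{j_0,j_\ell\}),
\]
for $m=0,\ell,s$ respectively. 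The relation becomes
\[
D_0\det M(I\setminus\{i_k\},J\setminus\{j_\ell,j_s\})
-D_\ell\det M(I\setminus\{i_k\},J\setminus\{j_0,j_s\})
+D_s\det M(I\setminus\{i_k\},J\setminus\{j_0,j_\ell\})=0 .
\]
Rearranging, this is the claimed identity.

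The only real obstacle is the sign bookkeeping in Steps 1 and 3. The cofactor signs from Step 1 must combine with the permutation signs from Step 3 to give exactly one minus sign, on the $D_\ell$ term. As a sanity check, we will verify the sign pattern for $s=2$, $k=0$, $\ell=1$, where the identity reduces to $d_1 b_1 = d_0 b_2 + d_2 b_0$ in the notation $d_m = D_m$, $b_m = N(1,m)$, and can be checked by direct expansion.
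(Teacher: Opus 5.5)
Your proof is correct. It is not the paper's route, because the paper gives no argument for this lemma: it quotes it from Pinkus's book (identity (1.2)) as a classical determinantal identity, one that can be obtained, e.g., from Sylvester's determinant identity. You prove it from scratch instead. You take the cofactor relation $\sum_{m=0}^{s}(-1)^m D_m c_m=0$ for the $s\times(s+1)$ matrix $M(I,J)$, project it by deleting row $i_k$, and apply multilinearity in one column of an $(s-1)\times(s-1)$ determinant. This is a three-term Grassmann--Pl\"ucker relation.

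The sign bookkeeping checks out. The cofactor signs $1,(-1)^{\ell},(-1)^{s}$ times the reordering signs $1,(-1)^{\ell-1},(-1)^{s-2}$ give $+,-,+$, which rearranges exactly to the stated identity. Your tacit assumption $s\ge 2$ costs nothing, since the range $1\le \ell\le s-1$ is empty when $s=1$.

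The citation buys brevity. Your route buys a self-contained algebraic proof that also shows the identity's generality. Write $E_{pq}=\det M(I\setminus\{i_k\},J\setminus\{j_p,j_q\})$. The same computation gives $D_qE_{pr}=D_pE_{qr}+D_rE_{pq}$ for any $0\le p<q<r\le s$, so the extreme columns $j_0,j_s$ play no special role. The paper only needs $k=0$, $p=0$, $r=s$ in the proof of Theorem~\ref{thm:structural-fekete}.

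There is one slip in your planned sanity check. For $s=2$, $k=0$, $\ell=1$ the identity reads $d_1b_1=d_0b_0+d_2b_2$, because $J\setminus\{j_1,j_2\}=\{j_0\}$ is the column paired with $d_0$. The version you wrote, $d_1b_1=d_0b_2+d_2b_0$, is false: rows $(1,0,0)$ and $(0,1,1)$ give $1=0$. Checking it would wrongly suggest a sign error, but the proof itself is unaffected.
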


The following result is due to Fekete;
see, for example,~\cite[Lemma~2.1]{Pin10}.

\begin{lemma}[Fekete's lemma]
	\label{lem:fekete}
	Let $M$ be an $n\times k$ matrix, where $n\geq k$.
	Suppose that every minor of order $k-1$ formed from the first
	$k-1$ columns of $M$ is strictly positive, and that every minor
	of order $k$ formed from consecutive rows of $M$ is strictly
	positive. Then every minor of order $k$ of $M$ is strictly positive.
\end{lemma}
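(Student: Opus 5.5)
This is Fekete's lemma, a classical result cited from Pinkus. I would prove it by induction on the row gap $g = i_{k-1}-i_0-(k-1)$ of a chosen row set $i_0<\dots<i_{k-1}$, which measures how far the rows are from being consecutive. For $g=0$ the rows are consecutive, so the minor is positive by hypothesis. Since $M$ has exactly $k$ columns, every minor of order $k$ uses all columns. The only freedom is therefore the row set.

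For the inductive step, suppose $g>0$. Then some row index $p$ with $i_0<p<i_{k-1}$ is not in $I$. I would apply a Sylvester/Plücker-type identity, in the form of Lemma~\ref{delete} with rows and columns interchanged (i.e. applied to $M^{T}$), to the $(k+1)$ rows $I\cup\{p\}$ and the $k$ columns. Let the enlarged row set be $\{r_0<\dots<r_k\}$, with $p=r_\ell$ for some $1\le\ell\le k-1$. Take the deleted column in the second factor to be the last column $k-1$. The lemma then reads
\begin{align*}
&\det M(R\setminus\{r_\ell\},C)\cdot \det M(R\setminus\{r_0,r_k\},C')\\
&\quad=\det M(R\setminus\{r_0\},C)\cdot\det M(R\setminus\{r_\ell,r_k\},C')\\
&\qquad+\det M(R\setminus\{r_k\},C)\cdot\det M(R\setminus\{r_0,r_\ell\},C'),
\end{align*}
where $C=\{0,\dots,k-1\}$ and $C'=\{0,\dots,k-2\}$.

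The left side contains $\det M(I,C)$, since $R\setminus\{r_\ell\}=I$. Every factor involving $C'$ is a minor of order $k-1$ from the first $k-1$ columns, hence strictly positive by hypothesis. The two order-$k$ minors on the right use the row sets $R\setminus\{r_0\}$ and $R\setminus\{r_k\}$. Each has strictly smaller spread $r_k-r_1$ or $r_{k-1}-r_0$, so each has a smaller gap and is positive by induction. Hence the right side is positive, and dividing by the positive factor $\det M(R\setminus\{r_0,r_k\},C')$ gives $\det M(I,C)>0$.

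The main point to check is the bookkeeping: the induction measure must genuinely decrease, and the needed case $1\le \ell\le k-1$ must hold, which it does because $p$ lies strictly between $i_0$ and $i_{k-1}$. One must also make sure that the transposed version of Lemma~\ref{delete} is legitimate. It is, since determinants are invariant under transposition.
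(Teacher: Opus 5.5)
Your proof is correct: the transposed form of Lemma~\ref{delete} with the last column deleted turns all three order-$(k-1)$ factors into minors from the first $k-1$ columns, and both order-$k$ minors on the right have strictly smaller row gap, so the induction closes. The paper gives no proof of its own here (it only cites Pinkus), but your argument is the same gap-filling induction the paper runs on the column side in the proof of Theorem~\ref{thm:structural-fekete}, and it is the standard proof of Fekete's lemma.
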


Using Fekete's lemma, 
we present a criterion for TP$_r$.
\begin{theorem}
	\label{thm:structural-fekete}
	Let $M=[M(n,k)]_{0\le k\le n\le N}$ be a finite lower triangular matrix, and let
	$r\geq 1$ be fixed. Suppose that every admissible
	minor with consecutive $s$ rows and consecutive $s$ columns is strictly positive for $1\leq s\leq r$.
	Then every admissible minor of $M$ of order at most $r$
	is strictly positive. 
	In particular, $M$ is TP$_r$.
\end{theorem}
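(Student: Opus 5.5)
We argue by induction on the order $s+1$ of the minor. The case $s+1=1$ is immediate: an admissible minor of order $1$ is a single entry $M(i,j)$ with $j\le i$, and such an entry is a $1\times1$ minor with consecutive rows and columns, so it is strictly positive by hypothesis. Now suppose that every admissible minor of order at most $s$ (with $s<r$) is strictly positive, and fix admissible index sets $I=\{i_0,\ldots,i_s\}_<$ and $J=\{j_0,\ldots,j_s\}_<$. The plan is to reduce in two stages: first to consecutive columns, and then to consecutive rows, where the hypothesis applies.

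\emph{Reduction to consecutive columns.} We do this by a secondary induction on the gap $j_s-j_0-s$. If some $\ell'$ satisfies $j_{\ell'+1}>i_{\ell'}$, then Lemma~\ref{lem:zero-cut} factors $\det M(I,J)$ into two admissible minors of smaller order. Both factors are positive by the induction hypothesis. So we may assume $j_{\ell+1}\le i_\ell$ for all $\ell$. If $J$ is not consecutive, choose a column $c$ with $j_0<c<j_s$ and $c\notin J$. We then apply Lemma~\ref{delete} with the $s+1$ row indices $I$ and the $s+2$ column indices $J\cup\{c\}$, where $c$ plays the role of the interior index $j_\ell$, and $i_k$ is a suitable row, say $i_s$ or $i_0$. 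This expresses
\[
\det M(I,J)\cdot\det M\bigl(I\setminus\{i_k\},(J\cup\{c\})\setminus\{j_0,j_s\}\bigr)
\]
as a sum of two products. Each product consists of a minor of order $s+1$ whose column set has smaller gap (or equal gap but is closer to consecutive), times a minor of order $s$. The order-$s$ minors are positive by induction as long as they are admissible. The order-$(s+1)$ minors on the right-hand side are either inadmissible, hence zero by Lemma~\ref{lem:structural-zero}, or admissible with a smaller gap, hence positive by the secondary induction. The left-hand order-$s$ factor must be admissible and therefore positive; this is where the choice of $i_k$ matters. Dividing by it then gives $\det M(I,J)>0$, provided at least one right-hand term is strictly positive.

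\emph{Reduction to consecutive rows.} When $J$ is consecutive, we use Fekete's lemma. Consider the submatrix of $M$ with columns $J$ and all rows $i$ with $j_0\le i\le N$ (or the row range spanned by $I$). Its order-$s$ minors taken from the first $s$ columns are admissible-or-zero. This is where the inadmissible zeros cause trouble, because Fekete's lemma requires strict positivity. To handle this, we restrict to rows $i\ge j_s$, or more carefully to the row range $[j_0+?,\ldots]$, so that every relevant minor is admissible. If that fails, we instead run the same Lemma~\ref{delete} argument with the roles of rows and columns exchanged, inserting a missing row index between $i_0$ and $i_s$ and inducting on $i_s-i_0-s$. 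This reduces to consecutive rows and consecutive columns, which the hypothesis covers.

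\emph{Main obstacle.} The delicate step is the bookkeeping in Lemma~\ref{delete}. We must choose $i_k$ and the inserted index so that the divisor minor is admissible, hence strictly positive, and so that at least one term on the right is admissible and strictly positive while the others are nonnegative. I would attempt this by taking $c$ to be the smallest missing column and $k=s$. Then I would check the admissibility inequalities $j_{\ell+1}\le i_\ell$ (available after the zero-cut reduction) term by term, falling back to $k=0$ in the boundary case where this choice fails.
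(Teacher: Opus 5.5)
Your architecture is the paper's: induction on the order, then insertion of a column $c$ into a gap of $J$ together with Lemma~\ref{delete} and induction on the column spread $j_s-j_0-s$, then Fekete's lemma once the columns are consecutive. The gap is that both steps where the proof actually happens are left open. In the Fekete step, the obstacle you describe does not exist, and your concrete fix is wrong.

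For consecutive $J=\{j_0,\ldots,j_0+s\}$, take $B=M(\{j_0,j_0+1,\ldots,N\},J)$. Any rows $r_0<\cdots<r_{s-1}$ of $B$ satisfy $r_m\ge r_0+m\ge j_0+m$. So \emph{every} order-$s$ minor of $B$ taken from its first $s$ columns is admissible, hence strictly positive by the order induction. Every order-$(s+1)$ minor of $B$ with consecutive rows is admissible with consecutive rows and columns, hence positive by hypothesis. Fekete's lemma then covers all admissible minors with column set $J$, since these have $i_0\ge j_0$. There are no inadmissible zeros to work around. Restricting to rows $\ge j_s$, as you suggest, would discard the admissible minors with $j_0\le i_0<j_s$ and leave the base case of your column induction unproved.

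Your row-insertion fallback is sound if carried out. Apply Lemma~\ref{delete} to the transpose with $I'=I\cup\{a\}=\{i'_0<\cdots<i'_{s+1}\}$, and delete the \emph{last} column $j_s$. Since $i'_{m+1}\ge i_m\ge j_m$, the three order-$s$ minors (row sets $\{i'_1,\ldots,i'_s\}$, $\{i_0,\ldots,i_{s-1}\}$, $\{i_1,\ldots,i_s\}$ against $\{j_0,\ldots,j_{s-1}\}$) are admissible. Also $\det M(I'\setminus\{i'_0\},J)$ is admissible with smaller row spread. This gives the consecutive-column case without Fekete, but as written you only list it as a contingency.

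For the column step, the bookkeeping you call the main obstacle is short. The paper's choice $k=0$ (delete the first row) needs neither case analysis nor the zero-cut preliminary. With $J'=J\cup\{c\}=\{j'_0<\cdots<j'_{s+1}\}$ one has $j'_m\le j_m$ for $m\le s$. Hence, against the rows $\{i_1,\ldots,i_s\}$, the column sets $\{j'_1,\ldots,j'_s\}$, $\{j_0,\ldots,j_{s-1}\}$ and $\{j_1,\ldots,j_s\}$ all give admissible minors. Also $\det M(I,J'\setminus\{j'_{s+1}\})$ is admissible with strictly smaller spread. So one right-hand product is strictly positive and the other is nonnegative, and dividing by the positive divisor gives $\det M(I,J)>0$.

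Your choice $k=s$ also works, but only because of the zero-cut reduction. The companion minor of the positive term, $\det M(\{i_0,\ldots,i_{s-1}\},\{j_1,\ldots,j_s\})$, is admissible exactly when $j_{m+1}\le i_m$ for all $m$. Which missing column $c$ you insert is irrelevant.
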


\begin{proof}
	
	We proceed by induction on the order $s$. 
	The case $s=1$ is trivial.
	Let $2\leq s\leq r$, and suppose that all admissible minors of order less than $s$ are strictly positive.
	We first consider minors formed from consecutive $s$ columns. 
	Consider the $(N-k+1)\times s$ matrix 
	$$
	B=M\left(\{k,k+1,\ldots, N\},\{k,k+1,\ldots,k+s-1\}\right)
	$$
	with $k+s-1\le N$.
	By hypothesis,
	every minor of $B$ formed from consecutive $s$ rows and $s$ columns is strictly positive.
	Let
	$
	k\leq r_0<r_1<\cdots<r_{s-2}\leq N.
	$
	Then the minor
	\[
	\det M\left({\{r_0,\ldots,r_{s-2}\}_<,\{k,k+1,\ldots,k+s-2\}}\right)
	\]
	is admissible. It is strictly positive by the
	induction hypothesis. Thus all minors of order $s-1$ of $B$
	formed from its first $s-1$ columns are strictly positive.
	By Lemma~\ref{lem:fekete}, every minor of order $s$
	of $B$ is strictly positive. 
	Hence, for the lower triangular matrix $M$, 
	every admissible minor of order $s$ formed from consecutive columns is strictly
	positive.
	
	We next consider an arbitrary admissible minor
	$M({I,J})$ of order $s$.
	Let
	$I=\{i_0,\ldots,i_{s-1}\}_<$
	and
	$
	J=\{j_0,\ldots,j_{s-1}\}_<.$
	Define 
	\[
	d(J)=j_{s-1}-j_0-(s-1).
	\]
	We proceed by induction on $d(J)$. If $d(J)=0$, then the columns
	are consecutive, and the assertion has already been proved.
	Suppose that $d(J)>0$. Choose a gap in $J$ and an integer $b$
	inside this gap. Let
	\[
	J'=\{j'_0,j'_1,\ldots,j'_{s}\}=J\cup\{b\},
	\]
	where $b=j'_\ell$ for some $1\leq\ell\leq s-1$ and
	$
	J=J'\setminus\{j'_\ell\}.
	$

	By Lemma~\ref{delete} with the first selected row deleted, we obtain
	\begin{align*}
		&\det M\left({I,J'\setminus\{j'_\ell\}}\right)\cdot
		\det M\left({I\setminus\{i_0\},J'\setminus\{j'_0,j'_{s}\}}\right)\\
		&\quad=
		\det M({I,J'\setminus\{j'_0\}})\cdot \det M({I\setminus\{i_0\},J'\setminus\{j'_\ell,j'_{s}\}})\\
		&\qquad+
		\det M({I,J'\setminus\{j'_{s}\}})\cdot
		\det M({I\setminus\{i_0\},J'\setminus\{j'_0,j'_\ell\}}).
	\end{align*}
	The three minors of order $s-1$ appearing as the second
	factors are admissible. Indeed,
	their column sets are, respectively,
	\[
	J'\setminus\{j'_0,j'_{s}\},
	\qquad
	J'\setminus\{j'_\ell,j'_{s}\}=J\setminus\{j_{s-1}\},
	\qquad
	J'\setminus\{j'_0,j'_\ell\}=J\setminus\{j_0\},
	\]
	while their row set is
	$
	(i_1,\ldots,i_{s-1}).
	$
	They are therefore strictly positive by the induction
	hypothesis on the order, since these minors have order $s-1$.
	
	The pair $(I,J'\setminus\{j_s'\})$ is admissible, and $d(J'\setminus\{j'_{s}\})<d(J)$. Therefore,
	\[
	\det M({I,J'\setminus\{j'_{s}\}})>0
	\]
	by the induction hypothesis on $d(J)$.
	Similarly, 
    $d(J'\setminus\{j'_{0}\})<d(J)$, and
   the minor
	$
	\det M({I,J'\setminus\{j'_0\}})
	$
	is nonnegative:
	it is strictly positive if it is
	admissible, and it vanishes otherwise by Lemma~\ref{lem:structural-zero}.
	Consequently,
	\[
   \det M\left({I,J}\right)
	=
	\det M\left({I,J'\setminus\{j'_\ell\}}\right)>0.
	\]
	This completes the proof.
\end{proof}

Although Theorem~2.5 is stated for finite matrices, it applies
to an infinite lower triangular matrix through its finite leading
principal truncations. Indeed, every finite minor is contained in
one such truncation.

\subsection{A family of identities for real-rooted polynomials}

Total positivity is closely related to the real-rootedness of polynomials.
A polynomial is called \emph{real-rooted} if it has only real zeros.
For a sequence $(g_i)_{i\ge 0}$,
its upper Toeplitz matrix is $[g_{j-i}]_{j\ge i\ge 0}.$
Note that $g_i=0$ if $i<0$.
Aissen, Edrei, Schoenberg and Whitney \cite{AESW51} showed that a polynomial with nonnegative coefficients is real-rooted 
if and only if the Toeplitz matrix of its coefficients is totally positive.

If a polynomial
$
\sum_{i=0}^{n} a_i x^i
$
with nonnegative coefficients is real-rooted, then the coefficients satisfy Newton's inequalities:
for \(1\leq i\leq n-1\),
\begin{equation}\label{NI}
	a_i^2
	\geq
	a_{i-1}a_{i+1}
	\left(1+\frac{1}{i}\right)
	\left(1+\frac{1}{n-i}\right).
\end{equation}

Let $f(x)$ be a real-rooted polynomial of degree $n$ with nonnegative coefficients and $f(0)=1$.
Then 
\begin{equation*}\label{eq:root-factorization}
	f(x)=\prod_{i=1}^{n}(1+r_i\,x),
\end{equation*}
with $r_i> 0$ for all $i$.
For $I\subseteq[n]=\{1,\ldots,n\}$, define
\begin{equation*}\label{eq:deletion-definitions}
	f_I(x)=\prod_{i\notin I}(1+r_i\,x)=\frac{f(x)}{\prod_{i\in I} (1+r_i\,x)},
	\qquad c_I=\prod_{i\in I}(1+r_i),
	\qquad G_j(x)=\sum_{|I|=j}c_I f_I(x),
\end{equation*}
and set $G_0(x)=f(x)$.
Note that $G_{j}(x)=0$ for $j>n$, and
\begin{equation}\label{eq:G_12}
G_1(x)=\sum_{i=1}^n\frac{(1+r_i)\,f(x)}{1+r_i\,x}, \qquad G_2(x)=\sum_{1\leq j<\ell\leq n}
\frac{(1+r_j)(1+r_\ell)\,f(x)}{(1+r_j\,x)(1+r_\ell\,x)}.
\end{equation}
The following recurrence is an iterated form of Fisk's identity; see
Fisk~\cite[Eq.~(4.2.2), pp.~104--106]{Fis08}.
We include a direct proof for completeness.

\begin{lemma}\label{lem:root-deletion}
	For $0\le j\le n$, we have
	\begin{equation}\label{eq:root-deletion-recurrence}
		(j+1)\,G_{j+1}(x)=(n-j)\,G_{j}(x)+(1-x)\,G_{j}'(x).
	\end{equation}
\end{lemma}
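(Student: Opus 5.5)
We verify the identity termwise: expand $G_j'(x)$ over the subsets $I$ with $|I|=j$, and show that both sides have the same coefficient on each product $c_K f_K(x)$ with $|K|=j+1$. The key observation is a one-factor identity. For a single root $r_i$, note that $(1-x)r_i = (1+r_i x) - ... $; more usefully,
\[
(1+r_i x) - (1-x)r_i = 1+r_i .
\]
This means that removing a factor $(1+r_ix)$ at the cost of multiplying by $(1+r_i)$ is exactly what the operator $(1-x)\,d/dx$, combined with a constant shift, produces.

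Fix $I$ with $|I|=j$. Since $f_I(x)=\prod_{i\notin I}(1+r_ix)$ is a product of $n-j$ linear factors, the product rule gives
\[
(1-x)f_I'(x)=\sum_{i\notin I}(1-x)r_i\,f_{I\cup\{i\}}(x).
\]
We also write
\[
(n-j)f_I(x)=\sum_{i\notin I}(1+r_ix)\,f_{I\cup\{i\}}(x),
\]
since this sum has $n-j$ terms, each equal to $f_I$. Subtracting and using the one-factor identity above, we obtain
\[
(n-j)f_I(x)-(1-x)f_I'(x)=\sum_{i\notin I}(1+r_i)\,f_{I\cup\{i\}}(x).
\]

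This produces a sign problem. The recurrence \eqref{eq:root-deletion-recurrence} has a plus sign in front of $(1-x)G_j'$, so I recheck the computation:
\[
(1+r_ix)+(1-x)r_i = 1+r_i+r_ix-r_ix...
\]
Expanding directly gives $1+r_ix+r_i-r_ix = 1+r_i$. So the sum, not the difference, is the correct combination, and
\[
(n-j)f_I(x)+(1-x)f_I'(x)=\sum_{i\notin I}(1+r_i)\,f_{I\cup\{i\}}(x).
\]
Pinning down this sign, and checking that $1-x$ (rather than $1+x$) is the right factor for the convention $f_I=\prod(1+r_ix)$, is the only delicate point.

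Now multiply by $c_I$ and sum over all $|I|=j$. Since $c_I(1+r_i)=c_{I\cup\{i\}}$, each set $K$ with $|K|=j+1$ arises exactly $j+1$ times, once for each choice of $i\in K$ with $I=K\setminus\{i\}$. The right-hand side therefore becomes $(j+1)G_{j+1}(x)$, which proves the recurrence.

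Two edge cases remain. For $j=0$ we have $I=\emptyset$ and $c_\emptyset=1$, consistent with the convention $G_0=f$. For $j=n$ the sum over $i\notin I$ is empty and $n-j=0$, so both sides vanish, in agreement with $G_{n+1}=0$. All remaining steps are routine bookkeeping.
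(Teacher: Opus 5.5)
Your proof is correct and is essentially the paper's argument: the per-subset identity $(n-j)f_I+(1-x)f_I'=\sum_{i\notin I}(1+r_i)f_{I\cup\{i\}}$, obtained from $(1+r_ix)+(1-x)r_i=1+r_i$, then multiplying by $c_I$ and counting each $(j+1)$-set exactly $j+1$ times. Your uniform treatment also covers $j=0$ and $j=n$, which the paper checks separately. In a clean write-up, delete the first ``one-factor identity'' $(1+r_ix)-(1-x)r_i=1+r_i$ and the subtracted display that follows it, because both are false (the left side equals $1-r_i+2r_ix$); only the plus-sign version is needed, and it is correct.
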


\begin{proof}
	For $j=0,$
	$$
	G_0'(x)=f'(x)=\sum_{i=1}^n\frac{r_i\,f(x)}{1+r_i\,x}.
	$$
	Then by~\eqref{eq:G_12},
	$$
	G_1(x)-(1-x)\,G_0'(x)=\left(\sum_{i=1}^n\frac{1+r_i}{1+r_i\,x}-\frac{(1-x)\,r_i}{1+r_i\,x}\right)\,f(x)=n\,f(x)=n\, G_0(x).
	$$
	For $j=n$, by definition,
	$G_n(x)=c_{[n]}$ and $G_n'(x)=0$.
	Thus, \eqref{eq:root-deletion-recurrence} holds for $j=0$ and $j=n$.

	Fix a subset $I\subseteq[n]$ with $|I|=j$, $1\le j\le n-1$. 
	Recall that
	$
	f_I(x)=\prod_{i\notin I}(1+r_i\,x).
	$
	Then 
	\[
	f_I'(x)=\sum_{i\notin I}r_i\,f_{I\cup\{i\}}(x) \qquad \textrm{and}\qquad
	f_I(x)=(1+r_i\,x)\,f_{I\cup\{i\}}(x) \quad \textrm{for }\,  i\notin I.
	\]
	It follows that 
	\begin{align*}
		(n-j)\,f_I(x)+(1-x)\,f_I'(x)&=\sum_{i\notin I}
		\left(f_I+(1-x)\,r_i\,f_{I\cup\{i\}}(x)\right)\\
		&=\sum_{i\notin I}
		\left[(1+r_i\,x)+(1-x)\,r_i\right]f_{I\cup\{i\}}(x)\\
		&=\sum_{i\notin I}(1+r_i)\,f_{I\cup\{i\}}(x).
	\end{align*}
	
	Multiplying by $c_I$ and summing over all
	$j$-element subsets $I$ gives
	\begin{align*}
		(n-j)\,G_j(x)+(1-x)\,G_j'(x)&=\sum_{|I|=j} \left[(n-j)\,c_I\,f_I(x)+(1-x)\,c_I\,f_I'(x)\right]\\
		&=
		\sum_{|I|=j}
		\sum_{i\notin I}
		c_I\,(1+r_i)\,f_{I\cup\{i\}}(x).
	\end{align*}
	For each $(j+1)$-element subset $J\subseteq[n]$, the term
	$c_J\,f_J(x)$ occurs once for every choice of $i\in J$, with
	$I=J\setminus\{i\}$. Hence it occurs exactly $j+1$ times.
	Therefore,
	\[
	(n-j)\,G_j(x)+(1-x)\,G_j'(x)
	=(j+1)\,\sum_{|J|=j+1}c_J\,f_J
	=(j+1)\,G_{j+1}.
	\]
	This completes the proof.
\end{proof}

\section{Proof of Theorem~\ref{thm:main}}
\label{sec:orders-two-three}

The  Eulerian numbers satisfy  the recurrence relation
\begin{equation}\label{eq:eulerian-recurrence}
	A({n+1,k})=(k+1)\,A({n,k})+(n-k+2)\,A({n,k-1}),
\end{equation}
with $A(0,0)=1$.
Note that $A(n,k)=0$ for $k<0$ or $k>n$.
The Eulerian polynomial is the row generating function of the Eulerian triangle,
\begin{equation*}\label{eq:Pn-definition}
	A_n(x)=\sum_{k=0}^{n}A({n,k})\,x^k.
\end{equation*}
By~\eqref{eq:eulerian-recurrence}, the Eulerian polynomials satisfy the recurrence relation
\begin{equation}\label{eq:Pn-recurrence}
	A_{n+1}(x)=\bigl(1+(n+1)\,x\bigr)\,A_n(x)+x\,(1-x)\,A_n'(x).
\end{equation}
The following classical result is due to Frobenius~\cite{Fro10}; see also
Liu and Wang~\cite{LW07}.

\begin{lemma}\label{lem:simple-negative-roots}
	For every $n\geq1$, the polynomial $A_n(x)$ has only negative
	zeros. 
\end{lemma}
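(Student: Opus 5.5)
The statement to prove is Lemma~\ref{lem:simple-negative-roots}: each $A_n(x)$, $n\ge1$, has only negative zeros. I would argue by induction on $n$, using the recurrence \eqref{eq:Pn-recurrence}, and in fact prove the stronger claim that $A_n(x)$ has $n$ \emph{simple} negative zeros. For $n=1$ we have $A_1(x)=1+x$, with its single zero at $-1$. Note that $A_n$ has degree exactly $n$, with leading coefficient $A(n,n)=1$ and $A_n(0)=A(n,0)=1$. Since all coefficients are positive, there are no zeros in $[0,\infty)$. So it suffices to locate $n+1$ distinct negative zeros of $A_{n+1}$.

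The key step is to rewrite the recurrence in a form adapted to the sign changes. Multiply by a suitable integrating factor so that
\[
A_{n+1}(x)=x(1-x)^{n+2}\frac{d}{dx}\Bigl[\frac{x\,A_n(x)}{(1-x)^{n+1}}\Bigr].
\]
To check this, expand the derivative: it equals $(1-x)^{-(n+2)}\bigl[(A_n+xA_n')(1-x)+(n+1)xA_n\bigr]$. Multiplying by $x(1-x)^{n+2}$ gives $x\bigl[(1+nx)A_n+x(1-x)A_n'\bigr]$. Comparing with \eqref{eq:Pn-recurrence}, I may need to adjust the placement of the factor $x$ (for instance, use $x\,\frac{d}{dx}$ with $A_n(x)/(1-x)^{n+1}$). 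I would fix the exponents by matching with \eqref{eq:Pn-recurrence}. The form I expect to work is
\[
\frac{A_{n+1}(x)}{(1-x)^{n+2}}=\frac{d}{dx}\Bigl[\frac{x\,A_n(x)}{(1-x)^{n+1}}\Bigr],
\]
which is the standard identity coming from $\sum_k (k+1)^{n} x^k=A_n(x)/(1-x)^{n+1}$.

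Once the identity is in hand, Rolle's theorem finishes the argument. By the induction hypothesis, $g(x)=xA_n(x)/(1-x)^{n+1}$ has $n+1$ distinct zeros in $(-\infty,0]$: the $n$ zeros of $A_n$ together with $0$. Rolle gives $n$ zeros of $g'$, hence of $A_{n+1}$, strictly between consecutive ones, and all of them are negative. One more zero is needed. As $x\to-\infty$, $g(x)\sim \pm x^{n+1}/|x|^{n+1}$ tends to a finite nonzero limit, namely $(-1)^{n+1}\cdot(-1)^{-(n+1)}=1$ up to sign. So $g$ vanishes at its leftmost zero $r$ and approaches a nonzero constant at $-\infty$. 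I would use the generalized Rolle theorem on $(-\infty,r)$, or equivalently a sign count of $A_{n+1}$ at $r$ against its leading behavior: this produces a zero of $g'$ left of the smallest zero of $A_n$. That gives $n+1$ distinct negative zeros, all simple because $\deg A_{n+1}=n+1$.

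The main obstacle is this last extra zero. I would handle it by an explicit sign check. At the smallest zero $r$ of $A_n$, the recurrence gives $A_{n+1}(r)=r(1-r)A_n'(r)$, whose sign is $(-1)^{n}$ times the sign of $A_n'(r)$, and the latter is $(-1)^{n-1}$ because $A_n$ is monic with simple zeros. Hence $\operatorname{sgn}A_{n+1}(r)=-1$ times a fixed sign, which is opposite to $\operatorname{sgn}A_{n+1}(-\infty)=(-1)^{n+1}$. The intermediate value theorem then gives a zero of $A_{n+1}$ in $(-\infty,r)$. The same evaluation at the other zeros of $A_n$ and at $0$, where $A_{n+1}(0)=1$, gives the interlacing directly, so the proof can avoid the integrating-factor identity altogether.
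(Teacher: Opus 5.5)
Your final argument, which evaluates $A_{n+1}$ at the zeros of $A_n$ through the recurrence, is correct once two slips are fixed.

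First, your exponents are off by one in the paper's indexing, where $\deg A_n=n$. The generating function is $\sum_{k\ge0}(k+1)^{n+1}x^k=A_n(x)/(1-x)^{n+2}$, and the identity matching \eqref{eq:Pn-recurrence} is
\[
\frac{A_{n+1}(x)}{(1-x)^{n+3}}=\frac{d}{dx}\Bigl[\frac{x\,A_n(x)}{(1-x)^{n+2}}\Bigr].
\]
Your version yields $(1+nx)A_n+x(1-x)A_n'$, as your own expansion shows. This matters for the Rolle route. With the correct exponent, $g(x)=xA_n(x)/(1-x)^{n+2}\to0$ as $x\to-\infty$, so the generalized Rolle theorem on $(-\infty,\rho_1)$ does produce the missing zero. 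With your exponent, $g$ tends to $(-1)^{n+1}\neq0$ and Rolle does not apply.

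Second, for $r<0$ the factor $r(1-r)$ is always negative. So $\operatorname{sgn}A_{n+1}(r)=-\operatorname{sgn}A_n'(r)=(-1)^{n}$, not $(-1)^n\operatorname{sgn}A_n'(r)=-1$; the latter would be opposite to $(-1)^{n+1}$ only for even $n$. With this correction the direct argument closes. Let $\rho_1<\cdots<\rho_n<0$ be the simple zeros of the monic $A_n$. Then $\operatorname{sgn}A_{n+1}(\rho_m)=(-1)^{n-m+1}$, and this sign:
\begin{itemize}
\item alternates in $m$;
\item for $m=1$ is opposite to the sign $(-1)^{n+1}$ of $A_{n+1}$ at $-\infty$;
\item for $m=n$ equals $-1$, against $A_{n+1}(0)=1$.
\end{itemize}
This gives $n+1$ distinct negative zeros of $A_{n+1}$. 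Your strengthened induction hypothesis (simple zeros) is exactly what makes the alternation work.

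As for the route, the paper does not prove this lemma at all; it quotes it from Frobenius and Liu--Wang. Inside the paper it also follows immediately from Lemma~\ref{LWRZ} with $a_n=n$, $b_n=1$, $c_n=-1$, $d_n=1$. That gives real-rootedness, and then $A(n,k)\ge0$ and $A_n(0)=1$ force all zeros to be negative.

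Your proof is a self-contained replacement for these citations. The Rolle/generating-function version is the classical argument. The sign-evaluation version is the kind of interlacing argument on which the Liu--Wang criterion rests, specialized to this recurrence. It also proves more than the lemma states, namely simple zeros and interlacing of the zeros of $A_n$ and $A_{n+1}$. The citation route is shorter and, through Lemma~\ref{LWRZ}, covers the whole family of Section~4 uniformly.
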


\subsection{Total positivity of order 2}
\begin{proposition}\label{tp2}
	The Eulerian triangle is TP$_2$. Moreover,
	every admissible minor of order 2 is strictly positive.
\end{proposition}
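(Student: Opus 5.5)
By Theorem~\ref{thm:structural-fekete} with $r=2$, it suffices to show two things. First, every entry $A(n,k)$ with $0\le k\le n$ is strictly positive. Second, every admissible minor of order $2$ with consecutive rows and columns is strictly positive. The first claim is immediate from the recurrence \eqref{eq:eulerian-recurrence} by induction on $n$, since both coefficients $k+1$ and $n-k+2$ are positive in the relevant range.

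For the second claim, take rows $\{n,n+1\}$ and columns $\{k,k+1\}$. Admissibility means $k\le n$ and $k+1\le n+1$, so $0\le k\le n$. We must show
\[
D(n,k)=A(n,k)A(n+1,k+1)-A(n,k+1)A(n+1,k)>0 .
\]
When $k=n$, the minor equals $A(n,n)A(n+1,n+1)=1>0$, because $A(n,n+1)=0$. So assume $0\le k\le n-1$. Substitute the recurrence \eqref{eq:eulerian-recurrence} for both entries of row $n+1$:
\[
A(n+1,k+1)=(k+2)A(n,k+1)+(n-k+1)A(n,k),
\]
\[
A(n+1,k)=(k+1)A(n,k)+(n-k+2)A(n,k-1).
\]
Writing $a_i=A(n,i)$, this gives
\[
D(n,k)=(n-k+1)a_k^2+a_ka_{k+1}-(n-k+2)a_{k-1}a_{k+1}.
\]

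Positivity of this expression comes from Lemma~\ref{lem:simple-negative-roots} together with Newton's inequalities \eqref{NI}. Since $A_n(x)$ is real-rooted with positive coefficients, for $1\le k\le n-1$ we have
\[
a_k^2\ge a_{k-1}a_{k+1}\Bigl(1+\tfrac1k\Bigr)\Bigl(1+\tfrac1{n-k}\Bigr)\ge a_{k-1}a_{k+1}\,\frac{n-k+1}{n-k}.
\]
Hence
\[
(n-k+1)a_k^2\ge \frac{(n-k+1)^2}{n-k}\,a_{k-1}a_{k+1}\ge (n-k+2)\,a_{k-1}a_{k+1},
\]
where the last step uses $(n-k+1)^2\ge (n-k)(n-k+2)$. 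The remaining term $a_ka_{k+1}$ is strictly positive for $k\le n-1$, so $D(n,k)>0$. The boundary case $k=0$ needs no Newton inequality: there $a_{-1}=0$, so $D(n,0)=(n+1)a_0^2+a_0a_1>0$. (If $n=0$, then $k=n$, which was handled above.)

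This verifies the hypotheses of Theorem~\ref{thm:structural-fekete} for $r=2$ on every finite leading truncation of $A$. The theorem then gives that every admissible $2\times2$ minor is strictly positive. Every non-admissible minor vanishes by Lemma~\ref{lem:structural-zero}. Together these give TP$_2$.

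The only substantive step is combining the recurrence with Newton's inequality, and the algebra above indicates a comfortable margin. The main thing to check carefully is the boundary indices $k=0$ and $k=n$.
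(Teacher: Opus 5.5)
Your proposal is correct and follows essentially the paper's route. You reduce via Theorem~\ref{thm:structural-fekete} to consecutive $2\times 2$ minors, expand row $n+1$ with the recurrence \eqref{eq:eulerian-recurrence}, and control $(n-k+1)a_k^2-(n-k+2)a_{k-1}a_{k+1}$ by Newton's inequalities. The only cosmetic difference is that you discard the factor $1+\frac1k$ and get strictness from the term $a_ka_{k+1}>0$, whereas the paper keeps the full Newton constant to get a strict inequality there and handles $k=0$ directly via $A(n+1,1)=2A(n,1)+n+1$.
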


\begin{proof}
	All admissible minors of order 1 are positive, i.e., $A(n,k)>0$ for $n\ge k\ge 0$.
	By Theorem~\ref{thm:structural-fekete},  it suffices to prove that every admissible
	minor of order~$2$ formed from two consecutive rows and two
	consecutive columns is strictly positive; that is,
	\[
	\det[A(n+i,k+j)]_{0\le i,j\le1}>0
	\]
	for every $0\le k\le n$.

	For $k=0$, the minor $\det[A(n+i,j)]_{0\leq i,j\leq 1}$ equals
	$$A(n,0)\,A(n+1,1)-A(n+1,0)\,A(n,1)=A(n+1,1)-A(n,1)>0,$$
	since $A(n+1,1)=2\,A(n,1)+(n+1)\,A(n,0)=2\,A(n,1)+n+1$ by the recurrence relation~\eqref{eq:eulerian-recurrence}.
	For $k=n$, the minor $\det[A(n+i,n+j)]_{0\leq i,j\leq 1}$ equals
	$$A(n,n)\,A(n+1,n+1)-A(n+1,n)\,A(n,n+1)=A(n,n)\,A(n+1,n+1)>0.$$
	
	Newton's inequalities~\eqref{NI} imply that for
	\(1\leq k\leq n-1\),
	\[
	A(n,k)^2
	\geq
	\frac{(k+1)(n-k+1)}{k(n-k)}\,
	A(n,k-1)\,A(n,k+1).
	\]
	Since
	\[
	\frac{(k+1)(n-k+1)}{k(n-k)}
	>
	\frac{n-k+2}{n-k+1},
	\]
	we obtain
	\begin{equation*}
		(n-k+1)\,A(n,k)^2
		>
		(n-k+2)\,	A(n,k-1)\,A(n,k+1).
	\end{equation*}
	Recall that
	the Eulerian recurrence~\eqref{eq:eulerian-recurrence} gives
	\[
	A(n+1,k)
	=(k+1)\,A(n,k)+(n-k+2)\,A(n,k-1).
	\]
	Therefore, $A(n,k)\, A(n+1,k+1)-A(n,k+1)\,A(n+1,k)$ equals
	\begin{align*}
		&A(n,k)\, \left[(k+2)\,A(n,k+1)+(n-k+1)\,A(n,k)\right]\\
		&\quad -A(n,k+1)\, \left[(k+1)\,A(n,k)+(n-k+2)\,A(n,k-1)\right]\\
		&=A(n,k)\, A(n,k+1)+(n-k+1)\, A(n,k)^2-(n-k+2)\, A(n,k+1)\, A(n,k-1)>0.
	\end{align*}
	This completes the proof.
\end{proof}

\subsection{Total positivity of order 3}
For a polynomial $f(x)$, we write $[x^k]f(x)$ for the coefficient of $x^k$.
For a polynomial sequence $\mathcal F=\left(f_0(x),\ldots,f_{s-1}(x)\right)$,
denote its coefficient matrix by $F=[F(i,j)]_{0\le i\le s-1,j\ge 0}$, with entries
$$
F(i,j)=[x^j]f_i(x).
$$
For an $s$-element set
$K=\{k_0,\cdots,k_{s-1}\}_<$, write
\begin{equation*}\label{eq:coefficient-minor}
	\cD_K\left(f_0,\ldots,f_{s-1}\right)
	=\det\left([x^{k_j}]f_i(x)\right)_{0\leq i,j\leq s-1}.
\end{equation*}

\begin{lemma}
Let	$\mathcal F=\left(f_0(x),\ldots,f_{s-1}(x)\right)$ be a polynomial sequence and $g(x)$ be a polynomial.
Let $\mathcal F_g=\left(f_0(x)\,g(x),\ldots,f_{s-1}(x)\,g(x)\right)$.
The corresponding coefficient matrices of $\mathcal F$ and $\mathcal F_g$ are $F$ and $F_g$, respectively.
Suppose that  $T_g$ is the upper Toeplitz matrix of the coefficients of $g$.
Then 
$$
F_g=F\cdot T_g.
$$ 
Moreover, by the Cauchy--Binet formula,
\begin{equation}\label{eq:toeplitz-cauchy-binet}
	\cD_K\left(f_0\,g,\ldots,f_{s-1}\,g\right)
	=\sum_{|L|=s}\cD_L\left(f_0,\ldots,f_{s-1}\right)\,\det T_g(L,K).
\end{equation}
\end{lemma}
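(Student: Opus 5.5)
The plan is to compare coefficients entry by entry, which gives the matrix identity, and then apply the Cauchy--Binet formula to the selected columns.

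Write $g(x)=\sum_{m\ge0} g_m x^m$ with $g_m=0$ for $m<0$. By the convention above, $T_g=[g_{k-\ell}]_{k\ge \ell\ge 0}$ is upper triangular, with row index $\ell$ and column index $k$. For each $i$ and $k$ we expand the product:
\[
[x^k]\bigl(f_i(x)\,g(x)\bigr)=\sum_{\ell\ge 0}[x^\ell]f_i(x)\,g_{k-\ell}=\sum_{\ell\ge0}F(i,\ell)\,T_g(\ell,k).
\]
The sum is finite, because $f_i$ is a polynomial and $g_{k-\ell}=0$ for $\ell>k$. The left-hand side is $F_g(i,k)$, so $F_g=F\cdot T_g$ as matrices. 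Both $F$ and $T_g$ may be regarded as having infinitely many columns; in that case only finitely many nonzero terms enter each product, so the multiplication is well defined.

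For the determinant identity, fix $K=\{k_0,\ldots,k_{s-1}\}_<$. Restricting $F_g=F\,T_g$ to the columns in $K$ gives $F_g(\cdot,K)=F\cdot T_g(\cdot,K)$. This is a product of an $s\times N$ matrix with an $N\times s$ matrix, where $N$ is any integer exceeding $\max_i\deg f_i$ and $k_{s-1}$. The columns of $F$ beyond $\max_i\deg f_i$ are zero, so truncating to $N$ columns loses nothing. The Cauchy--Binet formula then gives
\[
\det F_g(\cdot,K)=\sum_{|L|=s}\det F(\cdot,L)\,\det T_g(L,K).
\]
Here $\det F(\cdot,L)=\cD_L(f_0,\ldots,f_{s-1})$ and $\det F_g(\cdot,K)=\cD_K(f_0g,\ldots,f_{s-1}g)$, which is \eqref{eq:toeplitz-cauchy-binet}.

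The only point needing care is the finiteness of the sum over $L$. Any $L$ with an element exceeding $\max_i\deg f_i$ has a zero column in $F(\cdot,L)$, so $\cD_L=0$. Independently, since $T_g$ is upper triangular, $\det T_g(L,K)=0$ unless $\ell_j\le k_j$ for all $j$. Either observation makes the sum finite. Otherwise the proof is routine and I expect no real obstacle.
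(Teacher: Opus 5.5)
Your proof is correct and follows the paper's own (unwritten) argument. Coefficient convolution $[x^k](f_i g)=\sum_{\ell}F(i,\ell)\,g_{k-\ell}$ gives $F_g=F\,T_g$, and the Cauchy--Binet formula applied to the columns indexed by $K$ gives the identity; your truncation and finiteness remarks simply spell out details the paper leaves implicit.
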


Now we are ready to prove Theorem~\ref{thm:main}.
\begin{proof}[Proof of Theorem~\ref{thm:main}]
The admissible minors of order 1 are positive by the
combinatorial interpretation of the Eulerian numbers.
Proposition~\ref{tp2} proves that every admissible minor of order 2 is strictly positive.
It remains to prove the corresponding assertion for minors of order~$3$.

Apply Lemma~\ref{lem:root-deletion} with $f(x)=A_n(x)$.
Suppose that 
\begin{equation*}
	A_n(x)=\prod_{i=1}^{n}(1+r_i\,x),
\end{equation*}
with $r_i>0$ for all $i$.
Setting $j=0$ and $j=1$ in~\eqref{eq:root-deletion-recurrence} yields
\begin{align*}
	G_1(x)=n\,A_n(x)+(1-x)\,A_n'(x),\qquad
	2\,G_2(x)=(n-1)\,G_1(x)+(1-x)\,G_1'(x).
\end{align*}
Together with recurrence \eqref{eq:Pn-recurrence}, 
$$
A_{n+1}(x)=\bigl(1+(n+1)\,x\bigr)\,A_n(x)+x\,(1-x)\,A_n'(x),
$$
we obtain the following expressions for the next two
Eulerian row generating polynomials:
\begin{align}
	\Q&=(1+x)\,\P+x\,G_1(x),\label{eq:Q-three}\\
	\R&=(1+4x+x^2)\,\P+3\,x\,(1+x)\,G_1(x)+2\,x^2\,G_2(x).\label{eq:R-three}
\end{align}
Set
\begin{equation*}\label{eq:three-blocks}
	V_1(x)=x\,(\P+G_1(x)),\qquad V_2(x)=x\,\P,
	\qquad V_3(x)=x^2\,(\P+3\,G_1(x)+2\,G_2(x)).
\end{equation*}
We suppress the argument $(x)$ whenever no confusion can arise.
Then by~\eqref{eq:Q-three} and~\eqref{eq:R-three},
$$A_{n+1}=A_n+V_1 \qquad \textrm{and} \qquad A_{n+2}=A_n+3\,V_1+V_2+V_3.$$  
Elementary row operations and
linearity in the last row give that
\begin{equation}\label{eq:three-split}
	\cD_K(A_n,A_{n+1},A_{n+2})=\cD_K(A_n,V_1,V_2)+\cD_K(A_n,V_1,V_3).
\end{equation}
By Theorem~\ref{thm:structural-fekete}, 
it suffices to prove that $\cD_K(A_n,A_{n+1},A_{n+2})>0$ for $K=\{k,k+1,k+2\}$ and $n\ge k\ge 0$.

For a subset \(I\subseteq[n]\),
let 
$$f_I=\prod_{i\notin I}(1+r_i\,x)=\frac{\P}{\prod_{i\in I}(1+r_i\,x)},$$
where $f_{\varnothing}=\P$. If $I=\{i\}$, then 
define 
$$
f_i:=f_{\{i\}}=\frac{A_n(x)}{1+r_i\,x}.
$$
We use the notation
\[
c_I=\prod_{i\in I}(1+r_i),
\qquad
g_I=\prod_{i\in I}(1+r_i\,x)=\frac{A_n(x)}{f_I},
\]
where
$
c_{\varnothing}=1.
$
By~\eqref{eq:G_12},
\begin{align*}
	G_1(x)&=\sum_{i=1}^n\frac{(1+r_i)\,\P}{1+r_i\,x}=\sum_{i=1}^{n}(1+r_i)\,f_i,\\
	 G_2(x)&=\sum_{1\leq j<\ell\leq n}
	\frac{(1+r_j)(1+r_\ell)\,\P}{(1+r_j\,x)(1+r_\ell\,x)}=\sum_{|J|=2} c_J\, f_J.
\end{align*}

For the first summand $D_K(A_n,V_1,V_2)$ on the right-hand side of~\eqref{eq:three-split}, 
\[
V_1=x\,A_n+x\,G_1=x\,\P+\sum_{i=1}^{n}(1+r_i)\,x\,f_i,
\qquad V_2=x\,A_n.
\]
The term $x\,A_n$ in the middle row gives a repeated row.  
Thus 
\begin{align*}
	\cD_K(A_n,V_1,V_2)&=\cD_K\left(A_n,\,\sum_{i=1}^{n}(1+r_i)\,x\,f_i,\,x\,A_n\right)\\
	&=\sum_{i=1}^{n}\cD_K\left(A_n,\,(1+r_i)\,x\,f_i,\,x\,A_n\right)\\
	&=\sum_{i=1}^{n}\cD_K\left((1+r_i\,x)\,f_i,(1+r_i)\,x\,f_i,\,x\,(1+r_i\,x)\,f_i\right).
\end{align*}
Let $T_{f_i}$ be the Toeplitz matrix of the coefficients of $f_i$.
By~\eqref{eq:toeplitz-cauchy-binet},
$$\cD_K(A_n,V_1,V_2)=\sum_{i=1}^{n}\sum_{|L|=3}\cD_L(1+r_i\,x,(1+r_i)\,x,x\,(1+r_i\,x))\,\det T_{f_i}(L,K).$$
Thus, $\cD_L(1+r_i\,x,(1+r_i)\,x,x\,(1+r_i\,x))=0$ unless $L=\{0,1,2\}$; for $L=\{0,1,2\}$, its value is $(1+r_i)\,r_i$.  Hence
\begin{equation}\label{eq:three-first-nonnegative}
	\cD_K(A_n,V_1,V_2)
	=\sum_{i=1}^{n}r_i\,(1+r_i)\,\det T_{f_i}({\{0,1,2\},K})\geq0,
\end{equation}
since $f_i$ is real-rooted and $T_{f_i}$ is totally positive.

For the second summand $\cD_K(A_n,V_1,V_3)$ on the right-hand side of~\eqref{eq:three-split}, 
\(V_1\) and \(V_3\) can be written explicitly as
\[
V_1
=x\,(A_n+G_1)=
x\,A_n+\sum_{i=1}^{n}(1+r_i)\, x\,f_i=\sum_{|I|\leq1}
c_I\,x\,f_I,
\]
and 
\[
V_3=x^2(A_n+3\,G_1+2\,G_2)
=\sum_{|J|\leq2}
w_{|J|}\,c_J\,x^2\,f_J,
\]
where
$(w_0,w_1,w_2)=(1,3,2).$

By linearity in the second and third rows, we have
\begin{equation}\label{eq:V1V2-multilinear-expansion}
	\mathcal D_K(A_n,V_1,V_3)
	=
	\sum_{|I|\leq1,\ |J|\leq2}
	w_{|J|}\,c_I\,c_J
	\mathcal D_K\left(A_n,x\,f_I,x^2\,f_J\right).
\end{equation}
Fix a pair \((I,J)\) occurring in this sum and put
$
U=I\cup J.
$
Since \(|I|\leq1\) and \(|J|\leq2\), we have \(|U|\leq3\).
Moreover,
\[
A_n=f_U\,g_U,
\qquad
x\,f_I=f_U\,\left(x\,g_{U\setminus I}\right),
\qquad
x^2\,f_J=f_U\,\left(x^2\,g_{U\setminus J}\right).
\]
For example, if
$
I=\{i\}$,
$
J=\{j,k\}$ with
$
i,j,k$ all {distinct},
then \(U=\{i,j,k\}\) and $\bigl(A_n,x\,f_I,x^2\,f_J\bigr)$ equals
\[
f_U\,
\left((1+r_i\,x)(1+r_j\,x)(1+r_k\,x),\,
x(1+r_j\,x)(1+r_k\,x),\,
x^2(1+r_i\,x)\right).
\]
Applying the Toeplitz--Cauchy--Binet formula~\eqref{eq:toeplitz-cauchy-binet} gives
\begin{equation}\label{eq:local-cauchy-binet}
	\mathcal D_K(A_n,x\,f_I,x^2\,f_J)
	=
	\sum_{|L|=3}
	\mathcal D_L
	\bigl(
	g_U,\,
	x\,g_{U\setminus I},\,
	x^2\,g_{U\setminus J}
	\bigr)
	\det T_{f_U}(L,K).
\end{equation}

For the degrees of the three polynomials $g_U, x\,g_{U\setminus I}$ and $x^2\,g_{U\setminus J}$,
it is easy to see that 
\[
\deg g_U=|U|\leq3,\qquad \deg\bigl(x\,g_{U\setminus I}\bigr)\leq3, \qquad \deg\bigl(x^2\,g_{U\setminus J}\bigr)\leq3.
\]
Therefore, the only possible three-element column sets $L$ in~\eqref{eq:local-cauchy-binet}
are
\[
\{0,1,2\},\qquad
\{0,1,3\},\qquad
\{0,2,3\},\qquad
\{1,2,3\}.
\]
We now group together all pairs \((I,J)\) having the same union
\(U=I\cup J\). 
Rewriting~\eqref{eq:V1V2-multilinear-expansion},
we obtain that
$\mathcal D_K(A_n,V_1,V_3)$ equals
\begin{equation}\label{eq:three-local-expansion}
	\sum_{|U|\leq3}
	\;
	\sum_{\substack{
			L\in
			\{\{0,1,2\},\{0,1,3\},\{0,2,3\},\{1,2,3\}\}}}
	\Phi_U(L)\det T_{f_U}(L,K),
\end{equation}
where
\begin{equation}\label{eq:three-Phi}
	\Phi_U(L)
	=
	\sum_{\substack{
			|I|\leq1,\ |J|\leq2\\
			I\cup J=U}}
	w_{|J|}c_Ic_J
	\mathcal D_L
	\bigl(
	g_U,\,
	x\,g_{U\setminus I},\,
	x^2\,g_{U\setminus J}
	\bigr).
\end{equation}
Thus, \(\Phi_U(L)\) is simply the total contribution of all pairs
\((I,J)\) with the same union \(U\) and the same local column set
\(L\).

\begin{table}[htbp]
	\centering
	
	{\small
		\renewcommand{\arraystretch}{1.35}
		\setlength{\tabcolsep}{3.5pt}
		
		\begin{adjustbox}{max width=\linewidth}
			\begin{tabular}{|c|c|c|c|c|}
				\hline
				$U$
				&
				$\Phi_U(\{0,1,2\})$
				&
				$\Phi_U(\{0,1,3\})$
				&
				$\Phi_U(\{0,2,3\})$
				&
				$\Phi_U(\{1,2,3\})$
				\\
				\hline
				
				$\varnothing$
				&
				$1$
				&
				$0$
				&
				$0$
				&
				$0$
				\\
				\hline
				
				$\{i\}$
				&
				$c_i(3r_i+7)$
				&
				$c_i\,r_i$
				&
				$0$
				&
				$0$
				\\
				\hline
				
				$\{i,j\}$
				&
				$2c_i c_j(r_i+r_j+6)$
				&
				$3c_i c_j(r_i+r_j)$
				&
				$4c_i c_j\,r_ir_j$
				&
				$c_i c_j\,r_ir_j(r_i+r_j)$
				\\
				\hline
				
				$\{i,j,k\}$
				&
				$6c_i c_j c_k$
				&
				$2c_i c_j c_k(r_i+r_j+r_k)$
				&
				$2c_i c_j c_k(r_ir_j+r_ir_k+r_jr_k)$
				&
				$6c_i c_j c_k\,r_ir_jr_k$
				\\
				\hline
			\end{tabular}
		\end{adjustbox}
	}
	
	\caption{The local coefficients for three consecutive rows.}
	\label{tab:local-coefficients}
\end{table}

Label the elements of $U$ by $i,j,k$ and set
$c_i=1+r_i$, $c_j=1+r_j$, and $c_k=1+r_k$.  Direct expansion of
\eqref{eq:three-Phi} gives the following complete table.
The computations are given in Appendix~A.
Every entry in Table~\ref{tab:local-coefficients}  is nonnegative.  Since the
Toeplitz minors in \eqref{eq:three-local-expansion} are nonnegative,
\eqref{eq:three-split}--\eqref{eq:three-local-expansion} show that every
third-order minor of three consecutive rows is
nonnegative.

\medskip
\noindent
\textbf{Strict positivity.}
Take $K=\{k,k+1,k+2\}$ with $0\leq k\leq n$.  In
\eqref{eq:three-local-expansion}, the choice $U=\varnothing$ and
$L=\{0,1,2\}$ has coefficient $1$.  Hence, by~\eqref{eq:three-local-expansion},
  the expansion of the second summand $\cD_K(A_n,V_1,V_3)$ contains
\begin{equation*}\label{eq:three-strict-term}
	\det T_{A_n}(\{0,1,2\},\{k,k+1,k+2\}).
\end{equation*}
Since
\[
A_n(x)=\prod_{i=1}^{n}(1+r_i \,x)
=\sum_{j=0}^{n}e_j(r_1,\ldots,r_n)\,x^j,
\qquad r_i>0,
\]
the dual Jacobi--Trudi identity gives
\[
\det T_{A_n}(\{0,1,2\},\{k,k+1,k+2\})
=\det\bigl(e_{k-i+j}(r_1,\ldots,r_n)\bigr)_{i,j=1}^{3}
=s_{(3^k)}(r_1,\ldots,r_n).
\]
Since every Schur polynomial is a sum of monomials with nonnegative
integer coefficients and \(r_1,\ldots,r_n>0\), we have
\[
s_{(3^k)}(r_1,\ldots,r_n)>0.
\]
Therefore,
\[
\det T_{A_n}(\{0,1,2\},\{k,k+1,k+2\})>0.
\]
All other terms are nonnegative.
Hence, $\cD_K(A_n,A_{n+1},A_{n+2})>0$ for $K=\{k,k+1,k+2\}$ and $n\ge k\ge 0$.
This completes the proof by Theorem~\ref{thm:structural-fekete}.
\end{proof}

\section{Extensions to Eulerian-type triangles}

We present a sufficient condition for total positivity of order~$3$ for Eulerian-type triangular matrices.

\begin{theorem}
\label{thm:uniform}
	Let $B=[B(n,k)]_{n\ge k\ge0}$ be a
lower-triangular matrix, and let
\[
B_n(x)=\sum_{k=0}^n B(n,k)x^k
\]
be its $n$th row generating polynomial. 
Suppose that
\begin{equation}
	B_{n+1}(x)
	=\bigl(1+(n\,\beta+\alpha_n)\,x\bigr)\,B_n(x)
	+\beta x\,(1-x)\,B_n'(x),
	\label{eq:recurrence}
\end{equation}
with $B_0(x)=1.$
Assume that $\beta>0$ and that, for every
$n\ge0$,
\begin{equation}
	\alpha_n>0,
	\qquad
	\alpha_{n+1}-\alpha_n+\beta\geq0
	\label{eq:conditions}.
\end{equation}
Then $B$ is totally positive of order 3.
Moreover, every admissible minor of order at most 3 is strictly positive.
\end{theorem}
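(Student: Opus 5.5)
The plan is to run the argument of Section~\ref{sec:orders-two-three} with $A_n$ replaced by $B_n$, keeping track of the parameters $\beta$ and $\alpha_n$.

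\textbf{Preliminaries.} Comparing coefficients in \eqref{eq:recurrence} gives
\[
B(n+1,k)=(1+k\beta)\,B(n,k)+\bigl((n-k+1)\beta+\alpha_n\bigr)\,B(n,k-1).
\]
In particular the leading coefficient satisfies $B(n+1,n+1)=\alpha_nB(n,n)$. By induction, $B(n,k)>0$ for $0\le k\le n$, and $\deg B_n=n$ with $B_n(0)=1$. This settles the admissible minors of order~1.

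Next I show that $B_n$ has only real, negative, simple zeros. Write $B_{n+1}=(1+(n\beta+\alpha_n)x)B_n+\beta x(1-x)B_n'$. Evaluated at consecutive zeros of $B_n$, this equals $\beta x(1-x)B_n'$, so the signs alternate there. Together with the signs at $0$ and at $-\infty$ (using $\alpha_n>0$), this places $n+1$ negative zeros of $B_{n+1}$ interlacing those of $B_n$. This is the standard Liu--Wang argument. Hence $B_n=\prod_{i=1}^n(1+r_ix)$ with $r_i>0$, and Lemma~\ref{lem:root-deletion} and Newton's inequalities \eqref{NI} apply.

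\textbf{Order 2.} By Theorem~\ref{thm:structural-fekete} it suffices to treat consecutive $2\times 2$ minors. Using the coefficient recurrence above, for $1\le k\le n-1$,
\[
B(n,k)B(n+1,k+1)-B(n,k+1)B(n+1,k)=\beta B(n,k)B(n,k+1)+\bigl((n-k)\beta+\alpha_n\bigr)B(n,k)^2-\bigl((n-k+1)\beta+\alpha_n\bigr)B(n,k-1)B(n,k+1).
\]
Positivity follows from \eqref{NI}, since
\[
\frac{(n-k+1)\beta+\alpha_n}{(n-k)\beta+\alpha_n}\le\frac{n-k+1}{n-k}<\frac{(k+1)(n-k+1)}{k(n-k)} .
\]
The edge cases $k=0$ and $k=n$ are checked directly, as in Proposition~\ref{tp2}.

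\textbf{Order 3: rewriting the rows.} Lemma~\ref{lem:root-deletion} gives $G_1=nB_n+(1-x)B_n'$. Substituting into the recurrence yields
\[
B_{n+1}=B_n+V_1,\qquad V_1=x\,(\alpha_nB_n+\beta G_1).
\]
Applying the recurrence again, and using $2G_2=(n-1)G_1+(1-x)G_1'$, I expand $B_{n+2}$ in the form
\[
B_{n+2}=B_n+\lambda V_1+\mu\,xB_n+V_3,\qquad V_3=x^2\bigl(a_0B_n+a_1G_1+a_2G_2\bigr),
\]
with $a_2=2\beta^2$. The coefficients $\lambda,\mu,a_0,a_1$ are explicit in $\beta,\alpha_n,\alpha_{n+1}$. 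I expect the quantity $\alpha_{n+1}-\alpha_n+\beta$ to appear, for instance in $\mu$ or $a_1$, and the hypothesis \eqref{eq:conditions} is exactly what makes it nonnegative.

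By row operations, \eqref{eq:three-split} then becomes
\[
\cD_K(B_n,B_{n+1},B_{n+2})=\mu\,\cD_K(B_n,V_1,xB_n)+\cD_K(B_n,V_1,V_3).
\]
The first term is handled exactly as in \eqref{eq:three-first-nonnegative}: it reduces to a sum of $\beta r_i(1+r_i)\det T_{f_i}(\{0,1,2\},K)\ge0$.

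\textbf{Order 3: the local expansion (main obstacle).} For the second term I expand multilinearly over pairs $(I,J)$ with $|I|\le1$ and $|J|\le2$, group by $U=I\cup J$, and apply \eqref{eq:toeplitz-cauchy-binet}. This gives the analogue of \eqref{eq:three-local-expansion}, with coefficients $\Phi_U(L)$ that are now polynomials in $r_i$, $\beta$, $\alpha_n$ and $\alpha_{n+1}$. The main obstacle is recomputing the analogue of Table~\ref{tab:local-coefficients} and proving that every entry is nonnegative.

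My first attempt will be to write each entry as a nonnegative combination of monomials in $r$'s, $c$'s, $\beta$, $\alpha_n$ and $(\alpha_{n+1}-\alpha_n+\beta)$. This means eliminating $\alpha_{n+1}$ in favour of the last quantity. The individual determinants $\cD_L$ are the same $3\times3$ computations as in Appendix~A, only weighted differently, so the expected difficulty is bookkeeping rather than new ideas.

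\textbf{Strict positivity.} The $U=\varnothing$, $L=\{0,1,2\}$ term should carry a coefficient that is a positive multiple of $\alpha_n\beta$ or similar, hence positive. It multiplies $\det T_{B_n}(\{0,1,2\},K)=s_{(3^k)}(r_1,\dots,r_n)>0$. All other terms are nonnegative, so every consecutive $3\times3$ minor is strictly positive, and Theorem~\ref{thm:structural-fekete} completes the proof.
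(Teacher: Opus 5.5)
Your outline follows the paper's proof step for step. It uses the same order-2 identity closed by Newton's inequalities; your ratio chain is equivalent to the paper's $\alpha_n(n+1)+\beta(n-k)(n-k+1)>0$. It uses the same rewriting of $B_{n+2}$, where in fact $\lambda=\beta+2$, $\mu=\alpha_{n+1}-\alpha_n+\beta$, $a_0=\alpha_n\alpha_{n+1}$, $a_1=\beta(\alpha_n+\alpha_{n+1}+\beta)$ and $a_2=2\beta^2$. It uses the same grouping by $U=I\cup J$ with the determinants of Appendix~A reweighted. Proving real-rootedness by a direct interlacing argument instead of citing Liu--Wang is a harmless difference.

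The step that would fail as written is your nonnegativity check of the table. Eliminating $\alpha_{n+1}$ in favour of $\delta=\alpha_{n+1}-\alpha_n+\beta$ cannot succeed for two entries:
\begin{itemize}
\item $\Phi_\varnothing(\{0,1,2\})=\alpha_n^2\alpha_{n+1}=\alpha_n^2(\alpha_n-\beta+\delta)$;
\item $\Phi_{\{i\}}(\{0,1,3\})=\alpha_n\alpha_{n+1}\beta r_ic_i$.
\end{itemize}
No rewriting rescues these, because $\alpha_n>0$, $\beta>0$ and $\delta\ge0$ do not imply $\alpha_{n+1}>0$; take $\beta>\alpha_n+\delta$. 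You must use $\alpha_{n+1}>0$, which is the hypothesis at index $n+1$, as an independent positive quantity.

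The condition $\delta\ge0$ is then needed only in two places. One is the factor $\mu$ in the first summand. The other is the single entry $\Phi_{\{i,j\}}(\{1,2,3\})=\beta^2r_ir_jc_ic_j(r_i+r_j)\,\delta$. This entry arises when $-2\alpha_n\beta^2$, from the pair $(\varnothing,\{i,j\})$, combines with $\beta^2(\alpha_n+\alpha_{n+1}+\beta)$, from the pairs $(\{i\},\{j\})$ and $(\{j\},\{i\})$. Every other entry in the paper's table is visibly nonnegative once $\alpha_n,\alpha_{n+1},\beta>0$.

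For the same reason your guess for the strict-positivity coefficient is off. It is $\Phi_\varnothing(\{0,1,2\})=\alpha_n^2\alpha_{n+1}$, which contains no $\beta$, and its positivity again rests on $\alpha_{n+1}>0$.
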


When $\beta=1$ and $\alpha_n=1$ for all $n\geq0$, recurrence~\eqref{eq:recurrence}
reduces to the Eulerian recurrence~\eqref{eq:Pn-recurrence}, and hence
$B_n(x)=A_n(x)$ for every $n\geq0$.
In the following, we list some applications. 
\begin{itemize}
	\item[\rm(i)] $\beta=2$ and $\alpha_n=1$ for all $n$: the number $B(n,k)$ is the Eulerian number
	of type $B$, counting the signed permutations of order $n$ in the hyperoctahedral group with exactly $k$ descents~\cite{Bre94}.
	\item[\rm(ii)] $\beta=1$ and $\alpha_n=(m-1)n+m$ for all $n$ and for a fixed $m\ge 1$:
	the number $B(n,k)$ counts $m$-Stirling permutations of order $n+1$
	 with exactly $k+1$ descents~\cite{HV12}.
	\item[\rm(iii)] $\beta=r$ and $\alpha_n=r-1$ for all $n$ and for a fixed $r\ge 2$:
	the number $B(n,k)$ counts $r$-colored permutations of order $n$ with exactly $k$ descents~\cite{Ste94}.
\end{itemize}

\begin{corollary}
	The Eulerian triangle of type $B$, the $m$-Stirling Eulerian
	triangles for $m\ge 1$, and the $r$-colored Eulerian triangles for $r\ge 2$
	are totally positive of order 3.
	Moreover, every admissible minor of order at most 3 is strictly positive. 
\end{corollary}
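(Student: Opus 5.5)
The corollary follows from Theorem~\ref{thm:uniform}: for each of the three families I check the hypotheses of that theorem and then read off the conclusion. For each family there are two things to verify. First, the row generating polynomials, suitably indexed so that $B_0(x)=1$, satisfy recurrence~\eqref{eq:recurrence} for the listed $\beta$ and $\alpha_n$. Second, these parameters satisfy~\eqref{eq:conditions}.

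The inequalities in~\eqref{eq:conditions} are immediate.
\begin{itemize}
\item[(i)] For type $B$ with $\beta=2$ and $\alpha_n=1$: $\alpha_n=1>0$ and $\alpha_{n+1}-\alpha_n+\beta=2\ge0$.
\item[(ii)] For the $m$-Stirling case with $\beta=1$ and $\alpha_n=(m-1)n+m$: $\alpha_n\ge m\ge1>0$ and $\alpha_{n+1}-\alpha_n+\beta=(m-1)+1=m\ge0$.
\item[(iii)] For the $r$-colored case with $\beta=r$ and $\alpha_n=r-1$: $\alpha_n=r-1\ge1>0$ because $r\ge2$, and $\alpha_{n+1}-\alpha_n+\beta=r\ge0$.
\end{itemize}
In all three cases $\beta>0$. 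Note that $r=1$ is excluded in (iii) exactly because it would give $\alpha_n=0$.

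The substantive step is matching recurrence~\eqref{eq:recurrence} with the combinatorial descent polynomials, and I expect this to be the main obstacle.

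\emph{Type $B$.} I would invoke Brenti's recurrence~\cite{Bre94} for the type $B$ Eulerian polynomials,
\[
B_{n+1}(x)=\bigl(1+(2n+1)x\bigr)B_n(x)+2x(1-x)B_n'(x),\qquad B_0=1.
\]
This is~\eqref{eq:recurrence} with $\beta=2$ and $\alpha_n=1$.

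\emph{$m$-Stirling case.} I would use the descent recurrence for $m$-Stirling permutations from~\cite{HV12}, obtained by inserting the block of $m$ copies of the new largest letter. The care needed here is in the index shift: $B(n,k)$ counts permutations of order $n+1$ with $k+1$ descents. Dividing the descent polynomial by $x$ and shifting $n$ should give $B_0=1$ and a linear coefficient $1+(mn+m)x=1+(n\beta+\alpha_n)x$. As a sanity check, $m=1$ must recover~\eqref{eq:Pn-recurrence}, and it does, since then $n\beta+\alpha_n=n+1$.

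\emph{$r$-colored case.} I would use Steingrímsson's recurrence~\cite{Ste94},
\[
B_{n+1}(x)=\bigl(1+(rn+r-1)x\bigr)B_n(x)+rx(1-x)B_n'(x),
\]
derived by the usual insertion of the largest letter with one of its $r$ colors.

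In each case, if the cited source states the recurrence in a different normalization, I would rederive it directly. The argument is to track how inserting the new largest (colored) element changes the descent count. Then I would compare the constant and linear parts of the coefficient of $B_n$, and check the first two rows numerically.

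It remains to see that each matrix is genuinely lower triangular, which is what Theorem~\ref{thm:uniform} presupposes. Recurrence~\eqref{eq:recurrence} raises the degree by at most one, because the $x^{n+1}$ coefficient of $x(1-x)B_n'$ cancels against part of the term $(n\beta+\alpha_n)xB_n$. So $\deg B_n\le n$ by induction, and $B(n,k)=0$ for $k>n$. Theorem~\ref{thm:uniform} then gives TP$_3$ and the strict positivity of every admissible minor of order at most $3$ for all three families.
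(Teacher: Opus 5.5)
Your proposal is correct and follows the paper's route: the corollary is Theorem~\ref{thm:uniform} specialized to the parameter choices (i)--(iii), and your checks of \eqref{eq:conditions} and of the matching recurrences are sound, including the index shift in the $m$-Stirling case, which indeed gives $n\beta+\alpha_n=m(n+1)$. One minor slip: the bound $\deg B_{n+1}\le\deg B_n+1$ is automatic from the shape of \eqref{eq:recurrence}, since no term has degree $\deg B_n+2$, so it does not rely on cancellation; the partial cancellation you point to only shows that the top coefficient is $B(n+1,n+1)=\alpha_n B(n,n)>0$.
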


Liu and Wang established the following criterion for real-rootedness.
\begin{lemma}[{\cite[Proposition~3.5]{LW07}}]\label{LWRZ}
		Let $\{P_n(x)\}_{n\geq 0}$ be a sequence of polynomials with
		nonnegative coefficients such that
		$
		\deg P_n=\deg P_{n-1}+1.
		$
		Suppose that
		\[
		P_n(x)
		=(a_nx+b_n)P_{n-1}(x)
		+x(c_nx+d_n)P_{n-1}'(x),
		\]
		where $a_n,b_n\in \mathbb{R}$, $c_n\leq 0,$ and $d_n\geq 0.$
		Then every polynomial $P_n(x)$ is real-rooted.
\end{lemma}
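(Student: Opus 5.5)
We proceed by induction on $n$, using the standard interlacing argument for recurrences of this shape. The inductive claim is that $P_n(x)$ is real-rooted. Because its coefficients are nonnegative, all its zeros then lie in $(-\infty,0]$. The base case $\deg P_0=0$ is trivial.

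Assume $P_{n-1}$ has degree $m$ and only real zeros. First suppose those zeros are simple and nonzero: $r_1<r_2<\cdots<r_m<0$. (If $0$ is a zero, factor out the power of $x$ first; see the reduction below.) Evaluating the recurrence at a zero $r$ of $P_{n-1}$ gives
\[
P_n(r)=r\,(c_n r+d_n)\,P_{n-1}'(r).
\]
The sign of the factor in front is determined. Since $r<0$ and $c_n\le0$, we have $c_nr\ge0$, so $c_nr+d_n\ge d_n\ge0$ and hence $r(c_nr+d_n)\le0$. Assume for now that $c_nr+d_n>0$ at every $r_i$. Then $\operatorname{sign}P_n(r_i)=-\operatorname{sign}P_{n-1}'(r_i)$, and these signs alternate in $i$ because the $r_i$ are simple zeros.

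This sign information gives the zeros of $P_n$ as follows.
\begin{itemize}
\item By the intermediate value theorem, $P_n$ has a zero in each interval $(r_i,r_{i+1})$, which gives $m-1$ zeros.
\item Compare $P_n(r_1)$ with the sign of $P_n$ as $x\to-\infty$. Since $\deg P_n=m+1$ and the leading coefficient is positive, the sign at $-\infty$ is $(-1)^{m+1}$. Also $P_{n-1}'(r_1)$ has sign $(-1)^{m-1}$, so $P_n(r_1)$ has sign $(-1)^m$. Hence there is another zero in $(-\infty,r_1)$.
\end{itemize}
This yields $m$ distinct real zeros of a real polynomial of degree $m+1$. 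The remaining zero cannot be nonreal, since nonreal zeros come in conjugate pairs, so it is real as well.

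It remains to remove the genericity assumptions.
\begin{itemize}
\item \emph{A zero at $0$ or repeated zeros.} If $r$ is a zero of $P_{n-1}$ of multiplicity $k$, the recurrence shows $r$ is a zero of $P_n$ of multiplicity at least $k-1$. For $r=0$ the multiplicity is at least $k$, because of the factor $x$ in front of $P_{n-1}'$. One can therefore factor these off and rerun the sign-counting argument on the reduced polynomials.
\item \emph{Some $c_nr_i+d_n=0$.} In that case $r_i$ is itself a zero of $P_n$ and the counting still goes through.
\end{itemize}
A cleaner route is to handle all degenerate cases together by perturbation. Perturb $P_{n-1}$ to a nearby polynomial with simple negative zeros and nonnegative coefficients, and perturb $d_n$ to a slightly larger positive value. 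The argument above then applies to the perturbed polynomials. Passing to the limit, Hurwitz's theorem (zeros depend continuously on coefficients, and a limit of real-rooted polynomials of fixed degree is real-rooted) gives real-rootedness of $P_n$.

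I expect the degenerate cases to be the main technical nuisance. The core sign computation $r(c_nr+d_n)\le0$ is immediate, whereas making the perturbation respect both the hypothesis $\deg P_n=\deg P_{n-1}+1$ and nonnegativity of the coefficients needs a little care. I would try the continuity argument first, since it avoids case analysis.
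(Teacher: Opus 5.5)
Your proof is correct. The paper gives no proof of this lemma; it imports it from Liu and Wang. Your argument is the standard interlacing mechanism behind their criterion: at a zero $r\le 0$ of $P_{n-1}$ one has $P_n(r)=r(c_nr+d_n)P_{n-1}'(r)$ with $r(c_nr+d_n)\le 0$. This forces a sign change of $P_n$ between consecutive zeros of $P_{n-1}$, and one more to the left of the smallest zero. The only real difference is that you make the cited step self-contained, whereas the paper relies on the reference.

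A few points can be tightened.
(i) The lemma as stated tacitly needs $P_0$ to be real-rooted; for example, $P_0=1+x^2$ and $P_1=xP_0$ satisfy every other hypothesis. Your base case takes $P_0$ constant, which is what the application uses ($B_0=1$), but you should state it as a hypothesis.
(ii) The degenerate case $c_nr_i+d_n=0$ is trivial rather than something to count around. With $r_i<0$, $c_n\le 0$ and $d_n\ge 0$, it forces $c_n=d_n=0$, so $P_n=(a_nx+b_n)P_{n-1}$.
(iii) The perturbation needs no special care. The leading coefficient of $P_n$ is $(a_n+mc_n)$ times that of $P_{n-1}$ and does not involve $d_n$. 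So you can move the zeros of $P_{n-1}$ to distinct negative values, keeping its degree and leading coefficient, and replace $d_n$ by $d_n+\varepsilon$. The perturbed $P_n$ still has degree $m+1$ and positive leading coefficient. Nonnegativity of its coefficients is never used in the sign count.
(iv) If you instead factor off repeated zeros (after removing any zero at $0$), the reduced polynomial is not of the same shape. It is $(a_nx+b_n)S+x(c_nx+d_n)T$, where $S$ is the squarefree part of $P_{n-1}$ and $T=S\sum_j k_j/(x-r_j)$ rather than $S'$. Since $T(r_i)=k_iS'(r_i)$, the same count still works.
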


Now we are ready to prove Theorem~\ref{thm:uniform}.
The proof is similar to that of Theorem~\ref{thm:main},
and we will omit some details.

\begin{proof}[Proof of Theorem~\ref{thm:uniform}]
All admissible minors of order 1 are positive.
We first prove that every admissible minor of order 2 is strictly positive.
By Theorem~\ref{thm:structural-fekete}, 
it suffices to prove that
$B(n,k)\, B(n+1,k+1)-B(n,k+1)\,B(n+1,k)>0$ for all $n\ge k\ge 0$.

It is straightforward to verify that the two boundary cases
$k=0$ and $k=n$ hold.
For $1\leq k\leq n-1$, recurrence~\eqref{eq:recurrence}
gives that $B(n,k)\, B(n+1,k+1)-B(n,k+1)\,B(n+1,k)$ equals
\begin{align*}
	\beta B(n,k)\, B(n,k+1)
	+\bigl(\alpha_n+\beta(n-k)\bigr) B(n,k)^2
	-\bigl(\alpha_n+\beta(n-k+1)\bigr) B(n,k-1) B(n,k+1).
\end{align*}
By Lemma~\ref{LWRZ},
we obtain that $B_n(x)$ is real-rooted. 
Then
\[
B_n(x)=\prod_{i=1}^{n}(1+r_i\, x),
\]
with $r_i>0$ for all $i$.
Hence, by Newton's inequalities~\eqref{NI},
\[
B(n,k)^2
\geq
\frac{(k+1)(n-k+1)}{k(n-k)}
B(n,k-1)B(n,k+1).
\]
Furthermore,
\begin{align*}
	&\bigl(\alpha_n+\beta(n-k)\bigr)
	(k+1)(n-k+1)
	-\bigl(\alpha_n+\beta(n-k+1)\bigr)k(n-k)\\
	&=
	\alpha_n(n+1)
	+\beta(n-k)(n-k+1)>0.
\end{align*}
It follows that
\[
\bigl(\alpha_n+\beta(n-k)\bigr)B(n,k)^2
>
\bigl(\alpha_n+\beta(n-k+1)\bigr)B(n,k-1)B(n,k+1).
\]
Hence $B(n,k)\, B(n+1,k+1)-B(n,k+1)\,B(n+1,k)>0$ and $[B(n,k)]_{n\ge k\ge 0}$ is TP$_2$.

It remains to prove that
every admissible minor of order 3 is strictly positive.
Apply Lemma~\ref{lem:root-deletion} with $f(x)=B_n(x)$.
	For \(I\subseteq[n]\), set
	\[f_i=\frac{B_n(x)}{1+r_i\,x},\qquad
	f_I(x)=\frac{f(x)}{\prod_{i\in I}(1+r_i\, x)},\qquad
	c_I=\prod_{i\in I}(1+r_i),\qquad
	G_j=\sum_{\substack{I\subseteq[n]\\ |I|=j}}c_I \,f_I,
	\]
	and write \(c_i=1+r_i\).  
	Taking $j=0$ and $j=1$ in~\eqref{eq:root-deletion-recurrence} gives
	\begin{align*}
		G_1(x)=n\,B_n(x)+(1-x)\,B_n'(x),\qquad
		2\,G_2(x)=(n-1)\,G_1(x)+(1-x)\,G_1'(x).
	\end{align*}
	Together with recurrence \eqref{eq:recurrence}, 
	we obtain the following identities:
	\begin{align*}
		B_{n+1}
		&=B_n+x\,\bigl(\alpha_n \,B_n+\beta\, G_1\bigr),                                   \\
		B_{n+2}
		&=B_n+x\,\Bigl([\alpha_{n+1}+\beta+\alpha_{n}\,(1+\beta)]\,B_n+\beta\,(\beta+2)\,G_1\Bigr)             \notag\\
		&\quad+x^2\,\Bigl(\alpha_{n}\,\alpha_{n+1}\,B_n+\beta\,(\alpha_{n}+\alpha_{n+1}+\beta)\,G_1+2\,\beta^2\,G_2\Bigr).        
	\end{align*}

	Define
	\[
	V=\alpha_{n}\,B_n+\beta\, G_1,\qquad
	Q=\alpha_{n}\,\alpha_{n+1}\,B_n+\beta\,(\alpha_{n}+\alpha_{n+1}+\beta)\,G_1+2\,\beta^2\,G_2.
	\]
	Then 
	\[
	B_{n+1}=B_n+x\,V,\qquad
	B_{n+2}=B_n+(\beta+2)\,x\,V+(\alpha_{n+1}-\alpha_{n}+\beta) \,x\,B_n+x^2\,Q.
	\]
	Consequently, for any three-column set \(K\), elementary row operations
	and multilinearity give
	\begin{equation*}
		D_K(B_n,B_{n+1},B_{n+2})
		=\beta\,(\alpha_{n+1}-\alpha_{n}+\beta)\,D_K(B_n,x\,G_1,xB_n)+D_K(B_n,x\,V,x^2Q).            
	\end{equation*}
	The first summand on the right is nonnegative, since $\beta>0, \alpha_{n+1}-\alpha_{n}+\beta\ge 0$ and
	\begin{equation*}
		D_K(B_n,x\,G_1,x\,B_n)
		=\sum_{i=1}^{n}c_i r_i
		\det T_{f_i}(\{0,1,2\},K)\geq0.                 
	\end{equation*}
	
	For the second summand, 
	\[
	V=\alpha_{n}\,B_n+\beta\, G_1
	=\sum_{|I|\leq1}\mu_{|I|}c_I f_I,
	\qquad (\mu_0,\mu_1)=(\alpha_{n},\beta),
	\]
	and $Q=\alpha_{n}\,\alpha_{n+1}\,B_n+\beta\,(\alpha_{n}+\alpha_{n+1}+\beta)\,G_1+2\,\beta^2\,G_2$ equals
	\[
	\sum_{|J|\leq2}\nu_{|J|}\,c_J\, f_J,
	\qquad
	(\nu_0,\nu_1,\nu_2)
	=\bigl(\alpha_{n}\,\alpha_{n+1},\,\beta\,(\alpha_{n}+\alpha_{n+1}+\beta),\,2\,\beta^2\bigr).
	\]
	For \(U\subseteq[n]\), put
	\(g_U(x)=\prod_{i\in U}(1+r_i x)\).  For
	\(L\in\{\{0,1,2\},\{0,1,3\},\{0,2,3\},\{1,2,3\}\}\), define 
	\[
	\Phi_U(L)=
	\sum_{\substack{|I|\leq1,\ |J|\leq2\\ I\cup J=U}}
	\mu_{|I|}\nu_{|J|}\,c_I\,c_J\,
	D_L\!\left(g_U,xg_{U\setminus I},x^2g_{U\setminus J}\right).
	\]
	Multilinearity followed by the Cauchy--Binet formula yields
	\begin{equation*}
		D_K(B_n,x\,V,x^2\,Q)
		=\sum_{|U|\leq3}\ \sum_{L\in\{\{0,1,2\},\{0,1,3\},\{0,2,3\},\{1,2,3\}\}}
		\Phi_U(L)\det T_{f_U}(L,K).                            
	\end{equation*}
	We calculate $\Phi_U(L)$ in the same way as in the proof of Theorem~\ref{thm:main}.
    The
	following is the table and we omit the proof.
	\[
	\renewcommand{\arraystretch}{1.45}
	\begin{array}{c|c|l}
		U&L&\Phi_U(L)\\ \hline
		\varnothing&\{0,1,2\}&\alpha_{n}^2\alpha_{n+1}\\ \hline
		\{i\}&\{0,1,2\}&
		\beta c_i\!\left[\alpha_{n}^2+2\alpha_{n}\alpha_{n+1}+\alpha_{n}\beta(r_i+2)
		+\beta(\alpha_{n+1}+\beta)(r_i+1)\right]\\
		&\{0,1,3\}&\alpha_{n}\alpha_{n+1}\beta r_i c_i\\ \hline
		\{i,j\}&\{0,1,2\}&
		2\beta^2c_ic_j\!\left[2\alpha_{n}+\alpha_{n+1}+\beta(r_i+r_j+3)\right]\\
		&\{0,1,3\}&\beta^2c_ic_j(r_i+r_j)(\alpha_{n}+\alpha_{n+1}+\beta)\\
		&\{0,2,3\}&2\beta^2r_ir_jc_ic_j(\alpha_{n+1}+\beta)\\
		&\{1,2,3\}&\beta^2r_ir_jc_ic_j(r_i+r_j)(\alpha_{n+1}-\alpha_{n}+\beta)\\ \hline
		\{i,j,k\}&\{0,1,2\}&6\beta^3c_ic_jc_k\\
		&\{0,1,3\}&2\beta^3c_ic_jc_k(r_i+r_j+r_k)\\
		&\{0,2,3\}&2\beta^3c_ic_jc_k(r_ir_j+r_ir_k+r_jr_k)\\
		&\{1,2,3\}&6\beta^3c_ic_jc_kr_ir_jr_k
	\end{array}                                                 
	\]
	Every entry not displayed in this table is zero.
	It follows from condition~\eqref{eq:conditions} that every displayed coefficient in this table is nonnegative.
    The rest of the proof is to prove the strict positivity of $D_K(B_n,B_{n+1},B_{n+2})$,
    which is similar to that of Theorem~\ref{thm:main}.
    We leave it to the reader.
\end{proof}

\appendix
\section{Verification of \(\Phi_U(L)\) in the proof of Theorem~\ref{thm:main}}
\label{app:three-local-verification}

We give a direct verification of the values of $\Phi_U(L)$
listed in Table~\ref{tab:local-coefficients}. Recall that $(w_0,w_1,w_2)=(1,3,2)$ and
\[
\Phi_U(L)
=
\sum_{\substack{|I|\le 1,\ |J|\le 2\\ I\cup J=U}}
w_{|J|}c_Ic_J
D_L\bigl(g_U,xg_{U\setminus I},x^2g_{U\setminus J}\bigr).
\]

\subsection{The case \(U=\varnothing\)}

The only contributing pair is
\[
I=J=\varnothing.
\]
In this case,
\[
g_U=1,\qquad
x\,g_{U\setminus I}=x,\qquad
x^2\,g_{U\setminus J}=x^2.
\]
Hence,
\begin{align*}
	\mathcal D_L
	\bigl(
	g_U,\,
	x\,g_{U\setminus I},\,
	x^2\,g_{U\setminus J}
	\bigr)=\begin{cases}
		&1,\qquad \textrm{if }\, L=\{0,1,2\};\\
		&0,\qquad \textrm{otherwise;}
	\end{cases}
\end{align*}
and
$w_{|J|}c_Ic_J=w_0 c_\varnothing c_\varnothing=1$.
Then 
$\Phi_U(L)=1$ if $L=\{0,1,2\}$; otherwise, $\Phi_U(L)=0$.

\subsection{The case \(U=\{i\}\)}
The contributing pairs $(I,J)$ are
\[
(\varnothing,\{i\}),\qquad
(\{i\},\varnothing),\qquad
(\{i\},\{i\}).
\]
A direct computation gives
\[
\begin{array}{c|c|c|cccc}
	I&J&w_{|J|}c_Ic_J
	&\mathcal D_{\{0,1,2\}} &\mathcal D_{\{0,1,3\}}
	&\mathcal D_{\{0,2,3\}} &\mathcal D_{\{1,2,3\}}\\ \hline
	\varnothing&\{i\}
	&3c_i&1&0&0&0\\
	\{i\}&\varnothing
	&c_i&1&r_i&0&0\\
	\{i\}&\{i\}
	&3c_i^2&1&0&0&0
\end{array}.
\]
Summing the three contributing terms gives
\begin{align*}
	\Phi_{\{i\}}(\{0,1,2\})&=c_i (4+3c_i)=c_i (3r_i+7),\\
	\Phi_{\{i\}}(\{0,1,3\})&=c_ir_i,\\
	\Phi_{\{i\}}(\{0,2,3\})&=\Phi_{\{i\}}(\{1,2,3\})=0.
\end{align*}

These values follow from the following three cases.
\begin{itemize}
	\item For \((I,J)=(\varnothing,\{i\})\), the triple $\left(g_U,\,
	x\,g_{U\setminus I},\,
	x^2\,g_{U\setminus J}\right)$ equals 
	$
	\left(1+r_i\,x, x\,(1+r_i\,x),x^2\right),
	$
	and the coefficient matrix is 
	$$\begin{pmatrix}
		1&r_i&0&0\\
		0&1&r_i&0\\
		0&0&1&0
	\end{pmatrix}.$$
	\item For \((I,J)=(\{i\},\varnothing)\), the triple  $\left(g_U,\,
	x\,g_{U\setminus I},\,
	x^2\,g_{U\setminus J}\right)$ equals 
	$
	1+r_i\,x, x,x^2(1+r_i\,x),
	$
	and the coefficient matrix is 
	$$\begin{pmatrix}
		1&r_i&0&0\\
		0&1&0&0\\
		0&0&1&r_i
	\end{pmatrix}.$$
	\item For \((I,J)=(\{i\},\{i\})\), the triple  $\left(g_U,\,
	x\,g_{U\setminus I},\,
	x^2\,g_{U\setminus J}\right)$ equals 
	$
	1+r_i\,x, x, x^2,
	$
	and the coefficient matrix is 
	$$\begin{pmatrix}
		1&r_i&0&0\\
		0&1&0&0\\
		0&0&1&0
	\end{pmatrix}.$$
\end{itemize}

\subsection{The case \(U=\{i,j\}\)}
The contributing pairs $(I,J)$ are
\[
\begin{aligned}
	(\varnothing,\{i,j\}),\quad
	(\{i\},\{j\}),\quad
	(\{j\},\{i\}),\quad
	(\{i\},\{i,j\}),\quad
	(\{j\},\{i,j\}).
\end{aligned}
\]
A direct computation gives
\[
\begin{array}{c|c|c|cccc}
	I&J&w_{|J|}c_Ic_J
	&\mathcal D_{\{0,1,2\}} &\mathcal D_{\{0,1,3\}}
	&\mathcal D_{\{0,2,3\}} &\mathcal D_{\{1,2,3\}}\\ \hline
	\varnothing&\{i,j\}
	&2c_ic_j&1&0&-r_ir_j&-(r_i+r_j)r_ir_j\\
	\{i\}&\{j\}
	&3c_ic_j&1&r_i&r_ir_j&r_ir_j^2\\
	\{j\}&\{i\}
	&3c_ic_j&1&r_j&r_ir_j&r^2_ir_j\\
	\{i\}&\{i,j\}
	&2c^2_ic_j&1&0&0&0\\
	\{j\}&\{i,j\}
	&2c_ic^2_j&1&0&0&0
\end{array}.
\]
Summing the five contributing terms gives
\begin{align*}
	\Phi_{\{i,j\}}(\{0,1,2\})&=c_ic_j (8+2c_i+2c_j)=2c_ic_j(r_i+r_j+6),\\
	\Phi_{\{i,j\}}(\{0,1,3\})&=3c_ic_j(r_i+r_j),\\
	\Phi_{\{i,j\}}(\{0,2,3\})&=4c_ic_jr_ir_j,\\
	\Phi_{\{i,j\}}(\{1,2,3\})&=c_ic_jr_ir_j(r_i+r_j).
\end{align*}

These values follow from the following five cases.
\begin{itemize}
	\item For \((I,J)=(\varnothing,\{i,j\})\), the triple $\left(g_U,\,
	x\,g_{U\setminus I},\,
	x^2\,g_{U\setminus J}\right)$ equals 
	$$
	\left((1+r_i\,x)(1+r_j\,x), x\,(1+r_i\,x)(1+r_j\,x),x^2\right),
	$$
	and the coefficient matrix is 
	$$\begin{pmatrix}
		1&r_i+r_j&r_ir_j&0\\
		0&1&r_i+r_j&r_ir_j\\
		0&0&1&0
	\end{pmatrix}.$$
	\item For \((I,J)=(\{i\},\{j\})\), the triple $\left(g_U,\,
	x\,g_{U\setminus I},\,
	x^2\,g_{U\setminus J}\right)$ equals 
	$$
	\left((1+r_i\,x)(1+r_j\,x), x\,(1+r_j\,x),x^2\,(1+r_i\,x)\right),
	$$
	and the coefficient matrix is 
	$$\begin{pmatrix}
		1&r_i+r_j&r_ir_j&0\\
		0&1&r_j&0\\
		0&0&1&r_i
	\end{pmatrix}.$$
	\item For \((I,J)=(\{j\},\{i\})\), the triple $\left(g_U,\,
	x\,g_{U\setminus I},\,
	x^2\,g_{U\setminus J}\right)$ equals 
	$$
	\left((1+r_i\,x)(1+r_j\,x), x\,(1+r_i\,x),x^2\,(1+r_j\,x)\right),
	$$
	and the coefficient matrix is 
	$$\begin{pmatrix}
		1&r_i+r_j&r_ir_j&0\\
		0&1&r_i&0\\
		0&0&1&r_j
	\end{pmatrix}.$$
	\item For \((I,J)=(\{i\},\{i,j\})\), the triple $\left(g_U,\,
	x\,g_{U\setminus I},\,
	x^2\,g_{U\setminus J}\right)$ equals 
	$$
	\left((1+r_i\,x)(1+r_j\,x), x\,(1+r_j\,x),x^2\right),
	$$
	and the coefficient matrix is 
	$$\begin{pmatrix}
		1&r_i+r_j&r_ir_j&0\\
		0&1&r_j&0\\
		0&0&1&0
	\end{pmatrix}.$$
	\item For \((I,J)=(\{j\},\{i,j\})\),  the triple $\left(g_U,\,
	x\,g_{U\setminus I},\,
	x^2\,g_{U\setminus J}\right)$ equals 
	$$
	\left((1+r_i\,x)(1+r_j\,x), x\,(1+r_i\,x),x^2\right),
	$$
	and the coefficient matrix is 
	$$\begin{pmatrix}
		1&r_i+r_j&r_ir_j&0\\
		0&1&r_i&0\\
		0&0&1&0
	\end{pmatrix}.$$
\end{itemize}

\subsection{The case \(U=\{i,j,k\}\)}

Since \(|I|\leq1\), \(|J|\leq2\), and \(I\cup J=U\), the only
contributing pairs are
\[
(\{i\},\{j,k\}),\qquad
(\{j\},\{i,k\}),\qquad
(\{k\},\{i,j\}).
\]
A direct computation gives
\[
\begin{array}{c|c|c|cccc}
	I&J&w_{|J|}c_Ic_J
	&\mathcal D_{\{0,1,2\}} &\mathcal D_{\{0,1,3\}}
	&\mathcal D_{\{0,2,3\}} &\mathcal D_{\{1,2,3\}}\\ \hline
	\{i\}&\{j,k\}
	&2c_ic_jc_k&1&r_i&r_ir_j+r_ir_k-r_jr_k& y_1\\
	\{j\}&\{i,k\}
	&2c_ic_jc_k&1&r_j&r_ir_j+r_jr_k-r_ir_k&y_2\\
	\{k\}&\{i,j\}
	&2c_ic_jc_k&1&r_k&r_ir_k+r_jr_k-r_ir_j&y_3
\end{array},
\]
where 
\begin{align*}
	&y_1=r_ir_jr_k+(r_i-r_j)r_k^2+(r_i-r_k)r_j^2,\\
	&y_2=r_ir_jr_k+(r_j-r_i)r_k^2+(r_j-r_k)r_i^2,\\
	&y_3=r_ir_jr_k+(r_k-r_i)r_j^2+(r_k-r_j)r_i^2.
\end{align*}
Summing the three contributing terms gives
\begin{align*}
	\Phi_{\{i,j,k\}}(\{0,1,2\})&=6c_ic_jc_k,\\
	\Phi_{\{i,j,k\}}(\{0,1,3\})&=2c_ic_jc_k(r_i+r_j+r_k),\\
	\Phi_{\{i,j,k\}}(\{0,2,3\})&=2c_ic_jc_k(r_ir_j+r_ir_k+r_jr_k),\\
	\Phi_{\{i,j,k\}}(\{1,2,3\})&=6c_ic_jc_kr_ir_jr_k.
\end{align*}

These values follow from the following three cases.
\begin{itemize}
	\item For \((I,J)=(\{i\},\{j,k\})\), the triple $\left(g_U,\,
	x\,g_{U\setminus I},\,
	x^2\,g_{U\setminus J}\right)$ equals 
	$$
	\left((1+r_i\,x)(1+r_j\,x)(1+r_k\,x), x\,(1+r_j\,x)(1+r_k\,x),x^2(1+r_i\,x)\right),
	$$
	and the coefficient matrix is 
	$$\begin{pmatrix}
		1&r_i+r_j+r_k&r_ir_j+r_ir_k+r_jr_k&r_ir_jr_k\\
		0&1&r_j+r_k&r_jr_k\\
		0&0&1&r_i
	\end{pmatrix}.$$
	\item For \((I,J)=(\{j\},\{i,k\})\), the triple $\left(g_U,\,
	x\,g_{U\setminus I},\,
	x^2\,g_{U\setminus J}\right)$ equals 
	$$
	\left((1+r_i\,x)(1+r_j\,x)(1+r_k\,x), x\,(1+r_i\,x)(1+r_k\,x),x^2(1+r_j\,x)\right),
	$$
	and the coefficient matrix is 
	$$\begin{pmatrix}
		1&r_i+r_j+r_k&r_ir_j+r_ir_k+r_jr_k&r_ir_jr_k\\
		0&1&r_i+r_k&r_ir_k\\
		0&0&1&r_j
	\end{pmatrix}.$$
	\item For \((I,J)=(\{k\},\{i,j\})\), the triple $\left(g_U,\,
	x\,g_{U\setminus I},\,
	x^2\,g_{U\setminus J}\right)$ equals 
	$$
	\left((1+r_i\,x)(1+r_j\,x)(1+r_k\,x), x\,(1+r_i\,x)(1+r_j\,x),x^2(1+r_k\,x)\right),
	$$
	and the coefficient matrix is 
	$$\begin{pmatrix}
		1&r_i+r_j+r_k&r_ir_j+r_ir_k+r_jr_k&r_ir_jr_k\\
		0&1&r_i+r_j&r_ir_j\\
		0&0&1&r_k
	\end{pmatrix}.$$
\end{itemize}

\end{document}